\newtheorem{theorem}{Theorem}[section]
\newtheorem{proposition}[theorem]{Proposition}
\newtheorem{definition}[theorem]{Definition}
\newtheorem{corollary}[theorem]{Corollary}
\theoremstyle{definition}
\newtheorem{example}[theorem]{Example}
\numberwithin{equation}{section}
\newcommand{\mcal}{\mathcal}
\newcommand{\mscr}{\mathscr}
\newcommand{\set}[1]{\left\{ #1 \right\}}
\newcommand{\R}{\mathbb{R}}
\newcommand{\Z}{\mathbb{Z}}
\newcommand{\N}{\mathbb{N}}
\newcommand{\PP}{\mathbb{P}}
\newcommand{\f}{\infty}
\newcommand{\wh}[1]{\widehat{#1}}
\newcommand{\wt}[1]{\widetilde{#1}}
\newcommand{\ep}{\varepsilon}
\newcommand{\sse}{\subseteq}
\newcommand{\sm}{\setminus}
\newcommand{\D}{\;\mathrm{d}}
\title[weak convergence and spectrality]{Weak Convergence and Spectrality of Infinite Convolutions}
\author[W. Li]{Wenxia Li}
\address[W. Li]{School of Mathematical Sciences, Shanghai Key Laboratory of PMMP, East China Normal University, Shanghai 200241, People's Republic of China}
\email{wxli@math.ecnu.edu.cn}
\author[J. J. Miao]{Jun Jie Miao}
\address[J. J. Miao]{School of Mathematical Sciences, Shanghai Key Laboratory of PMMP, East China Normal University, Shanghai 200241, People's Republic of China}
\email{jjmiao@math.ecnu.edu.cn}
\author[Z. Wang]{Zhiqiang Wang*}
\address[Z. Wang]{School of Mathematical Sciences, Shanghai Key Laboratory of PMMP, East China Normal University, Shanghai 200241,
People's Republic of China}
\email{zhiqiangwzy@163.com}
\subjclass[2010]{28A80, 42C30, 60B10}
\thanks{* Corresponding author}
\begin{document}

\begin{abstract}
Let $\{ A_k\}_{k=1}^\infty$ be a sequence of finite subsets of $\mathbb{R}^d$ satisfying that $\# A_k \ge 2$ for all integers $k \ge 1$.
In this paper,  we first give a sufficient and necessary condition
for the existence of the infinite convolution
$$\nu =\delta_{A_1}*\delta_{A_2} * \cdots *\delta_{A_n}*\cdots, $$
where all sets $A_k \sse \R_+^d$ and $\delta_A = \frac{1}{\# A} \sum_{a \in A} \delta_a$.
Then we study the spectrality of a class of infinite convolutions generated by Hadamard triples in $\mathbb{R}$ and construct a class of singular spectral measures without compact support.
Finally we show that such measures are abundant, and the dimension of their supports has the intermediate-value property.
\end{abstract}

\keywords{weak convergence, infinite convolution, spectral measure}

\maketitle

\section{Introduction}

\subsection{Spectral measures and frame spectral measures}
Let $\nu$ be a Borel probability measure on $\R^d$. We call $\nu$ a \emph{spectral measure} if there exists a countable subset $\Lambda \sse \R^d$ such that the family of exponential functions $$\set{e_\lambda(x) = e^{-2\pi i \lambda \cdot x}: \lambda \in \Lambda}$$ forms an orthonormal basis in $L^2(\nu)$.
The set $\Lambda$ is called a \emph{spectrum} of $\nu$.
The existence of spectrum of $\nu$ is a basic question in harmonic analysis, and it was initiated in the paper of Fuglede~\cite{Fuglede-1974}, where he considered the normalized Lebesgue measure on measurable subsets.
Fuglede conjectured that
\begin{quote}
  \emph{A measurable set $\Gamma \sse \R^d$ with positive finite Lebesgue measure is a spectral set, that is, the normalized Lebesgue measure on $\Gamma$ is a spectral measure, if and only if $\Gamma$ tiles $\R^d$ by translations.}
\end{quote}
Although the both directions of Fuglede's conjecture were disproved by Tao \cite{Tao-2004} and the others \cite{Farkas-Matolcsi-Mora-2006,Farkas-Revesz-2006,Kolountzakis-Matolcsi-2006a,Kolountzakis-Matolcsi-2006b,Matolcsi-2005}
for $d \ge 3$,
the connection between spectrality and tiling has generated a lot of interest, and some affirmative results have been proved for special cases \cite{Iosevich-Katz-Tao-2003,Laba-2001}.
Fuglede's conjecture remains open for $d=1$ and $d=2$.
The study of spectral sets is also closely related to Gabor bases \cite{Liu-Wang-2003} and wavelets \cite{Wang-2002}.

A natural generalization of exponential orthonormal bases is the Fourier frame.
A Borel probability measure $\nu$ on $\R^d$ is called a \emph{frame spectral measure} if there exists $\Lambda \subseteq \R^d$ and $0< A \le B < \f$ such that
$$ A \| f\|^2 \le \sum_{\lambda \in \Lambda} \left| \int_{\R^d} f(x) e^{2\pi i \lambda \cdot x} \D \nu(x) \right|^2 \le B \| f\|^2$$ for all $f \in L^2(\nu)$.
The set $\Lambda$ is called a \emph{frame spectrum} of $\nu$, and the family of exponential functions $$\set{e_\lambda(x) = e^{-2\pi i \lambda \cdot x}: \lambda \in \Lambda}$$ is called a \emph{Fourier frame} of $L^2(\nu)$.
Fourier frames were introduced in 1952 by Duffin and Schaeffer \cite{Duffin-Schaeffer-1952} to study nonharmonic Fourier series.
Now, frames in Hilbert spaces have been a fundamental research area in applied harmonic analysis.
Because of overcompleteness, the reconstruction process of frames is more robust, which is very useful in signal processing.
Readers can refer to \cite{Christensen-2003} for the general frame theory.

In 1998, Jorgensen and Pedersen~\cite{Jorgensen-Pedersen-1998} discovered that some self-similar measures may also have spectra. A simple example is that the self-similar measure on $\R$ given by the identity $$\mu(\;\cdot\;) = \frac{1}{2} \mu(4\;\cdot\;) + \frac{1}{2} \mu(4\;\cdot\; -2)$$ is a spectral measure with a spectrum $$\Lambda = \bigcup_{n=1}^\f \set{\ell_1 + 4\ell_2 + \cdots + 4^{n-1} \ell_n: \ell_1,\ell_2,\cdots,\ell_n \in \set{0,1}}.$$
From then on, the spectrality and non-spectrality of various singular fractal measures, such as self-affine measures  and Cantor-Moran measures, have been extensively studied, see \cite{An-Fu-Lai-2019,An-He-He-2019,An-He-2014,An-He-Lau-2015,An-He-Li-2015,An-Wang-2021,Dai-He-Lau-2014,Deng-Chen-2021,
Dutkay-Haussermann-Lai-2019,Dutkay-Lai-2014,Dutkay-Lai-2017,Fu-Wen-2017,Laba-Wang-2002,Li-2009,Liu-Dong-Li-2017,
He-Tang-Wu-2019,LMW} and references therein for details.
In \cite{Lai-Wang-2017}, Lai and Wang demonstrated the first non-trivial example of singular fractal measure which is a frame spectral measure but not a spectral measure.
Readers may refer to~\cite{Falco03} for the details on fractal geometry.

There is a big difference between absolutely continuous measures and singular measures in the theory of spectrum and frame spectrum.
In the classical case, for the normalized Lebesgue measure on a finite interval, the frame spectrum is essentially characterized by the Beurling lower uniform density \cite{Landau-1967} and the complete description of frame spectra is given by Ortega-Cerd\'{a} and Seip \cite{Ortega-Seip-2002}.
Dutkay, Han, Sun, and Weber \cite{Dutkay-Han-Sun-Weber-2011} related the Beurling dimension of frame spectrum of self-similar measures to the Hausdorff dimension of self-similar sets.
Recently,  Shi \cite{Shi-2021} proved that for any Borel probability measure on $\R^d$, the Beurling dimension of frame spectrum is bounded by the upper entropy dimension of measure.
In particular, for self-affine measures, the Beurling dimension of frame spectrum (of course, of spectrum) is less than or equal to the Hausdorff dimension of measure.
Comparing with absolutely continuous measures, An and Lai \cite{An-Lai-2020} showed that all singular self-affine spectral measures generated by Hadamard triples have an arbitrarily sparse spectrum in the sense that the Beurling dimension is zero.

Associated with singular spectral measures, Strichartz \cite{Strichartz-2000,Strichartz-2006} studied the mock Fourier series and transforms, and showed the better convergence properties for some self-similar measures: the mock Fourier series of continuous functions converge uniformly, and the mock Fourier series of $L^p$-functions also converge in the $L^p$-norm.
A surprising phenomenon is that the convergence of the mock Fourier series may be very different for distinct spectra of a singular spectral measure \cite{Dutkay-Han-Sun-2014}.

Currently, spectral measures studied in the literature, to the best of our knowledge, are compactly supported.
In \cite{Nitzan-2016}, Nitzan et al. constructed the frame spectrum of the normalized Lebesgue measure on any unbounded subset with finite measure.
This motivates us to construct spectral measures without compact support.
For absolutely continuous measures, here is a trivial example.
Let $$\Gamma = \bigcup_{n=0}^\f \left(n+\frac{1}{n+2}, n+\frac{1}{n+1} \right].$$
It is easy to check that the set $\Gamma$ tiles $\R$ by the lattice $\Z$.
By the Fuglede's result in \cite{Fuglede-1974}, $\Gamma$ is a spectral set, that is, the Lebesgue measure on $\Gamma$ is a spectral measure.
Obviously, the set $\Gamma$ is unbounded.

In this paper we focus on constructing singular spectral measures which may not be compactly supported (see Theorem \ref{main-theorem}). Moreover, we show that such measures are abundant, and the dimension of their supports has the intermediate-value property (see Theorem \ref{dimension-theorem}).

\subsection{Infinite convolutions}
Let $\#$ denote the cardinality of a set, and let $\delta_a$ denote the Dirac measure at the point $a$.
For a finite subset $A \sse \R^d$, we define the uniform discrete measure supported on $A$ by
\begin{equation*}
\delta_A = \frac{1}{\# A} \sum_{a \in A} \delta_a.
\end{equation*}
Let $\{ A_k\}_{k=1}^\f$ be a sequence of finite subsets of $\R^d$ such that $\# A_k \ge 2$ for every $k \ge 1$.
For  each integer $n \geq 1$, we define
\begin{equation}\label{discrete-convolution}
  \nu_n =\delta_{A_1}*\delta_{A_2} * \cdots *\delta_{A_n},
\end{equation}
where the notation $*$ denotes the convolution between measures.
If the sequence of convolutions $\{\nu_n\}$ converges weakly to a Borel probability measure, then the weak limit is called the \emph{infinite convolution} of $\{\delta_{A_k}\}$, denoted by
\begin{equation}\label{infinite-convolution}
  \nu =\delta_{A_1}*\delta_{A_2} * \cdots *\delta_{A_k} *\cdots.
\end{equation}

A famous example of infinite convolutions is the so-called \textit{infinite Bernoulli convolution}
$$\nu_\lambda = \delta_{\{\pm \lambda\}} * \delta_{\{\pm \lambda^2\}} * \cdots *\delta_{\{\pm \lambda^k\}}*\cdots$$
with $\lambda\in(0,1)$, that is, $A_k=\{-\lambda^k,\lambda^k\}$ in \eqref{infinite-convolution}.
The measure $\nu_\lambda$ can be also seen as the distribution of $\sum _{k=1}^\infty \pm \lambda^k$ where the signs are chosen independently with probability $1/2$. Infinite Bernoulli convolutions have been studied since 1930's around a fundamental question which is to determine the absolute continuity and singularity of $\nu_\lambda$ for $\lambda \in (1/2, 1)$. Furthermore, if $\nu_\lambda$ is absolutely continuous, the smoothness of the density is explored; if $\nu_\lambda$ is singular, the dimension of measure is investigated.
These studies revealed connections with  harmonic analysis, the theory of algebraic numbers, dynamical systems, and fractal geometry \cite{Chan-Ngai-Teplyaev-2015,Hu-Lau-2019,Shmerkin-2014,Solomyak1995}. Readers can refer to the  excellent survey paper \cite{Peres-Schlag-Solomyak-2000} for this topic.
For the spectrality of infinite Bernoulli convolutions, Dai \cite{Dai-2012} showed that $\nu_\lambda$ is a spectral measure if and only if $\lambda = 1/(2q)$ for some positive integer $q$, and then Dai, He, and Lau \cite{Dai-He-Lau-2014} generalized it to the case of $N$-Bernoulli convolution.

The infinite convolution based on Hadamard triples was first raised by Strichartz~\cite{Strichartz-2000} to study the spectrality.
From then on, the following spectral problem of infinite convolutions is widely studied.
\begin{quote}
\emph{Given a sequence of Hadamard triples $\{(R_k, B_k, L_k)\}_{k=1}^\f$, under what assumptions is the infinite convolution
$$ \mu = \delta_{R_1^{-1} B_1} * \delta_{(R_2R_1)^{-1} B_2} * \cdots * \delta_{(R_n \cdots R_2 R_1)^{-1} B_n} * \cdots $$
spectral?}
\end{quote}
Many affirmative partial results have been obtained for this question, see~\cite{An-Fu-Lai-2019,An-He-He-2019,An-He-2014,An-He-Lau-2015,An-He-Li-2015,Dutkay-Haussermann-Lai-2019,
Dutkay-Lai-2017,Fu-Wen-2017,Laba-Wang-2002}.
If all Hadamard triples $(R_k,B_k,L_k)=(R,B,L)$, then the infinite convolution reduces to a self-affine measure.
Dutkay, Haussermann and Lai \cite{Dutkay-Haussermann-Lai-2019} proved that the self-affine measure with equal weights generated by a Hadamard triple in $\R^d$ is a spectral measure, and this result for the one-dimensional case was proved by {\L}aba and Wang \cite{Laba-Wang-2002}.

\subsection{Main results}
Let $\nu _n$ and $\nu $ be defined by (\ref{discrete-convolution}) and \eqref{infinite-convolution}, respectively.
To study the spectrality of $\nu$, the following question inevitably needs to be answered first.
\begin{center}
{\it With what conditions does the infinite convolution $\nu$ exist as the weak limit of $\{\nu_n\}$?}
\end{center}
In the first part of this paper, we obtain some easy-checked criteria (see Theorems \ref{weak-convergence-infinite-convolution}, \ref{weak-convergence-thm1} and Corollary \ref{cor_1}).

For $a=(a_1, a_2, \cdots, a_d) \in \R^d$, we denote $|a| = \sqrt{a_1^2 + a_2^2 + \cdots + a_d^2}$.
It is well-known that if
\begin{equation}\label{weak-convergence-condition}
  \sum_{k=1}^{\f} \max\set{|a|: a\in A_k} < \f,
\end{equation}
then the infinite convolution $\nu$ exists, and moreover, the support of  $\nu$ is compact.
In~\cite{An-He-He-2019}, the authors conjectured that in $\R$, if $A_k \sse [0,+\f)$ and $\# A_k =3$ for every $k \ge 1$, the assumption \eqref{weak-convergence-condition} is also necessary for the existence of infinite convolutions.
The following Corollary \ref{cor_1} gives an affirmative conclusion.

Write $\R_+ = [0,+\f)$, and we first provide a sufficient and necessary condition for the existence of infinite convolutions when all sets $A_k \sse \R_+^d$.

\begin{theorem}\label{weak-convergence-infinite-convolution}
  Let $\{ A_k\}_{k=1}^\f$ be a sequence of finite subsets of $\R_+^d$ satisfying that $\# A_k \ge 2$ for each $k \ge 1$.
  Let $\nu_n$ be defined in \eqref{discrete-convolution}.
  Then the sequence of convolutions $\set{\nu_n}$ converges weakly to a Borel probability measure if and only if
  \begin{equation}\label{eq-thm-1-1}
    \sum_{k=1}^{\f} \frac{1}{\# A_k} \sum_{a\in A_k} \frac{|a|}{1+|a|} < \f.
  \end{equation}
\end{theorem}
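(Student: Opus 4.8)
The plan is to pass to a probabilistic formulation. Regard each $\delta_{A_k}$ as the law of a random vector $X_k$ that is uniformly distributed on $A_k$, and take the $X_k$ mutually independent; then $\nu_n$ is exactly the law of the partial sum $S_n = X_1 + \cdots + X_n$. Since every $A_k \sse \R_+^d$, the increments $X_k$ are coordinatewise nonnegative, so along each sample path $S_n$ is coordinatewise nondecreasing and hence converges to a limit $S_\f \in [0,\f]^d$. Writing $Z_k = |X_k| \ge 0$, the elementary chain $\max_j X_k^{(j)} \le |X_k| \le \sum_j X_k^{(j)} \le d\,\max_j X_k^{(j)}$ shows that $S_\f$ is finite a.s. if and only if $\sum_k Z_k < \f$ a.s. Moreover, because $X_k$ is uniform on $A_k$, one has $\E\!\left[\tfrac{Z_k}{1+Z_k}\right] = \tfrac{1}{\#A_k}\sum_{a\in A_k}\tfrac{|a|}{1+|a|}$, so condition \eqref{eq-thm-1-1} is precisely $\sum_k \E\!\left[\tfrac{Z_k}{1+Z_k}\right] < \f$.

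Next I would show that weak convergence of $\{\nu_n\}$ to a probability measure is equivalent to $S_\f < \f$ almost surely. Since $S_n \in \R_+^d$ and the increments are nonnegative, $|S_n|$ is nondecreasing, so $\{|S_n| > R\} \uparrow \{|S_\f| > R\}$ and $\PP(|S_n|>R) \uparrow \PP(|S_\f|>R)$ for every $R$. If $S_\f < \f$ a.s., then $S_n \to S_\f$ a.s. and hence in distribution, so $\nu_n$ converges weakly to the law of $S_\f$. Conversely, if $\nu_n$ converges weakly to a probability measure, the limit being a probability measure forces tightness of $\{\nu_n\}$, which by the monotonicity above gives $\PP(|S_\f| \le R) \ge 1-\ep$ for a suitable $R = R(\ep)$; letting $\ep \to 0$ yields $S_\f < \f$ a.s. Thus the theorem reduces to the classical dichotomy for independent nonnegative summands: $\sum_k Z_k < \f$ a.s. if and only if $\sum_k \E[Z_k/(1+Z_k)] < \f$.

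For this remaining equivalence I would use the comparison $\tfrac{z}{1+z} \le \min\set{z,1} \le \tfrac{2z}{1+z}$ (valid for $z\ge 0$), which recasts \eqref{eq-thm-1-1} as $\sum_k \E[\min\set{Z_k,1}] < \f$. If this sum is finite, then $\sum_k \PP(Z_k>1) < \f$, so by the first Borel--Cantelli lemma only finitely many $Z_k$ exceed $1$ almost surely; the truncations $\tilde Z_k = \min\set{Z_k,1}$ satisfy $\E\!\left[\sum_k \tilde Z_k\right] < \f$ by Tonelli, hence $\sum_k \tilde Z_k < \f$ a.s., and since $\tilde Z_k = Z_k$ for all large $k$ we conclude $\sum_k Z_k < \f$ a.s. For the reverse implication, suppose $\sum_k Z_k < \f$ a.s. If $\sum_k \PP(Z_k>1) = \f$, the \emph{second} Borel--Cantelli lemma (here independence is essential) gives $Z_k>1$ infinitely often, contradicting convergence; hence $\sum_k \PP(Z_k>1) < \f$ and $\sum_k \tilde Z_k < \f$ a.s. Were $\sum_k \E[\tilde Z_k] = \f$, then with $m_n = \sum_{k\le n}\E[\tilde Z_k]$ and the bound $\mathrm{Var}(\tilde Z_k)\le \E[\tilde Z_k^2]\le \E[\tilde Z_k]$ (valid since $\tilde Z_k \le 1$) one gets $\mathrm{Var}\!\left(\sum_{k\le n}\tilde Z_k\right) \le m_n$, so Chebyshev's inequality forces $\sum_{k\le n}\tilde Z_k \to \f$ in probability, hence a.s. by monotonicity, a contradiction. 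Therefore $\sum_k \E[\tilde Z_k] < \f$, which is \eqref{eq-thm-1-1}. The main obstacle is exactly this dichotomy: the convergence half is a soft first-moment and Borel--Cantelli argument, whereas the divergence half genuinely uses independence, entering through the second Borel--Cantelli lemma and the variance estimate.
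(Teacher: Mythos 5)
Your proof is correct, but it takes a genuinely different route from the paper's. The paper truncates each $\delta_{A_k}$ at radius $1$ and invokes the Jessen--Wintner/Kolmogorov three-series theorem (its Theorem 3.2) as a black box, checking that convergence of the three series (tail masses, truncated means, truncated variances) is equivalent to \eqref{eq-thm-1-1}; the nonnegativity of the points of $A_k$ enters only in the necessity direction, to upgrade convergence of the vector-valued series of truncated means to absolute convergence. You instead exploit nonnegativity from the outset: the partial sums $S_n$ are coordinatewise monotone, so weak convergence of $\nu_n$ reduces to almost sure finiteness of $S_\infty$ by a soft tightness-plus-monotonicity argument (bypassing the L\'evy equivalence of convergence in distribution and almost sure convergence that underlies the general three-series theorem), and the statement becomes the classical dichotomy for independent nonnegative summands, which you prove from scratch with the two Borel--Cantelli lemmas and a Chebyshev variance estimate. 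In effect you reprove exactly the special case of the three-series theorem that is needed. The paper's argument is shorter given the cited tool; yours is self-contained, uses only elementary probability, and makes explicit where independence is genuinely required (the second Borel--Cantelli lemma and the additivity of variances of the truncations). The individual steps all check out: the comparison $z/(1+z)\le\min\{z,1\}\le 2z/(1+z)$ for $z\ge 0$, the norm equivalences reducing coordinatewise finiteness of $S_\infty$ to $\sum_k|X_k|<\infty$, and the monotonicity arguments converting convergence in probability of the truncated partial sums to an almost sure divergence are all valid.
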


Moreover, if the cardinalities of all sets $A_k$ are bounded, as a consequence of Theorem~\ref{weak-convergence-infinite-convolution}, we show that \eqref{weak-convergence-condition} is also a necessary condition for the weak convergence of $\{\nu_n\}$.

\begin{corollary}\label{cor_1}
  Let $\{ A_k\}_{k=1}^\f$ be a sequence of finite subsets of $\R_+^d$ satisfying that $\# A_k \ge 2$ for each $k \ge 1$.
  Let $\nu_n$ be defined in \eqref{discrete-convolution}.
  Suppose that $\sup\set{\# A_k: k \ge 1} < \f$.
  Then the sequence of convolutions $\{\nu_n\}$ converges weakly to a Borel probability measure if and only if
  $$\sum_{k=1}^{\f} \max\set{ |a| : a\in A_k} < \f. $$
\end{corollary}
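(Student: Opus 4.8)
The plan is to deduce the corollary directly from Theorem~\ref{weak-convergence-infinite-convolution} by comparing the series \eqref{eq-thm-1-1} with $\sum_k M_k$, where I write $M_k = \max\set{|a| : a \in A_k}$ and $N = \sup\set{\# A_k : k \ge 1} < \f$. By Theorem~\ref{weak-convergence-infinite-convolution}, the weak convergence of $\set{\nu_n}$ is equivalent to the finiteness of \eqref{eq-thm-1-1}, so it suffices to show that, under the hypothesis $N < \f$, the two conditions
$$
\sum_{k=1}^{\f} \frac{1}{\# A_k} \sum_{a\in A_k} \frac{|a|}{1+|a|} < \f
\qquad\text{and}\qquad
\sum_{k=1}^{\f} M_k < \f
$$
are equivalent.

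For the implication that $\sum_k M_k < \f$ forces convergence, I would simply bound each summand using $\frac{|a|}{1+|a|} \le |a| \le M_k$, which gives $\frac{1}{\# A_k} \sum_{a\in A_k} \frac{|a|}{1+|a|} \le M_k$, so that \eqref{eq-thm-1-1} is dominated by $\sum_k M_k$. Note that this direction does not even use the boundedness of the cardinalities.

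For the converse, I would assume \eqref{eq-thm-1-1} is finite. Choosing for each $k$ an element $a_k^* \in A_k$ with $|a_k^*| = M_k$ and discarding the remaining nonnegative terms yields the lower bound
$$
\frac{1}{\# A_k} \sum_{a\in A_k} \frac{|a|}{1+|a|}
\ge \frac{1}{\# A_k}\cdot\frac{M_k}{1+M_k}
\ge \frac{1}{N}\cdot\frac{M_k}{1+M_k},
$$
where the last inequality is precisely where the hypothesis $N < \f$ enters. Summing over $k$ gives $\sum_k \frac{M_k}{1+M_k} < \f$; in particular $M_k \to 0$, so $M_k \le 1$ for all large $k$, whence $\frac{M_k}{1+M_k} \ge \frac{M_k}{2}$ for such $k$. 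Comparing the tails then produces $\sum_k M_k < \f$, establishing the equivalence and hence the corollary.

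Since the argument is a chain of elementary comparisons, I do not expect a serious obstacle. The only place where the boundedness assumption $\sup_k \# A_k < \f$ is genuinely needed is the step $\frac{1}{\# A_k} \ge \frac{1}{N}$ in the converse direction, which is exactly what prevents the uniform weights from diluting the contribution of the largest point when the sets $A_k$ are permitted to grow in size.
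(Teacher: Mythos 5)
Your proposal is correct and follows essentially the same route as the paper: sufficiency by the trivial bound $\frac{|a|}{1+|a|}\le M_k$, and necessity by isolating the maximal element to get $\frac{1}{N}\sum_k \frac{M_k}{1+M_k}<\infty$, then using $M_k\to 0$ to compare $\frac{M_k}{1+M_k}$ with $M_k$ on the tail. The only difference is the choice of cutoff constant ($M_k\le 1$ versus $m_k<1/2$ in the paper), which is immaterial.
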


For the general case that all sets $A_k \sse \R^d$, the following theorem provides a sufficient condition, which may be easier to verify.

\begin{theorem}\label{weak-convergence-thm1}
  Let $\{ A_k\}_{k=1}^\f$ be a sequence of finite subsets of $\R^d$ satisfying that $\# A_k \ge 2$ for each $k \ge 1$.
  Let $\nu_n$ be defined in \eqref{discrete-convolution}.
  Suppose that
  \begin{equation}\label{eq-thm-1-3-1}
    \sum_{k=1}^{\f} \max\set{|a|^2: a\in A_k} < \f,
  \end{equation}
  and the series
  \begin{equation}\label{eq-thm-1-3-2}
    \sum_{k=1}^{\f} \frac{1}{\# A_k} \sum_{ a \in A_k} a
  \end{equation}
  converges in $\R^d$.
  Then the infinite convolution $\nu$ defined in \eqref{infinite-convolution} exists.
\end{theorem}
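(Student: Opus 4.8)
The plan is to pass to the probabilistic picture: realize $\nu_n$ as the distribution of a sum of independent random vectors and show that this sum converges in $L^2$, which forces convergence in distribution, i.e. weak convergence of $\set{\nu_n}$. Concretely, on some probability space take independent random vectors $X_1, X_2, \ldots$ with $X_k$ distributed according to $\delta_{A_k}$ (so $X_k$ is uniform on $A_k$). Then $S_n := X_1 + \cdots + X_n$ has law $\nu_n$, since convolution of measures corresponds to summing independent variables. Because convergence in $L^2$ implies convergence in probability and hence in distribution, it suffices to prove that $\set{S_n}$ is a Cauchy sequence in $L^2$; the sought infinite convolution $\nu$ is then the law of the $L^2$-limit $S$.

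Next I would center each variable. Put $m_k = \E X_k = \frac{1}{\# A_k}\sum_{a \in A_k} a$ and $Y_k = X_k - m_k$, so the $Y_k$ are independent with $\E Y_k = 0$. The key elementary estimate is $\E|Y_k|^2 = \E|X_k|^2 - |m_k|^2 \le \E|X_k|^2 \le \max\set{|a|^2 : a \in A_k}$. For $n > m$, expanding $S_n - S_m = \sum_{k=m+1}^n (Y_k + m_k)$ and using independence together with $\E Y_k = 0$ to annihilate all cross terms gives
\begin{equation*}
\E|S_n - S_m|^2 = \sum_{k=m+1}^n \E|Y_k|^2 + \Bigl|\sum_{k=m+1}^n m_k\Bigr|^2 .
\end{equation*}
The first sum is a tail of $\sum_k \max\set{|a|^2 : a \in A_k}$, which converges by \eqref{eq-thm-1-3-1}; the second term tends to $0$ by the Cauchy criterion, since \eqref{eq-thm-1-3-2} asserts that $\sum_k m_k$ converges in $\R^d$. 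Hence $\E|S_n - S_m|^2 \to 0$ as $m,n \to \f$, so $\set{S_n}$ is Cauchy in $L^2$.

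I expect the only genuine subtlety to be bookkeeping rather than conceptual: verifying that $L^2$-convergence of $S_n$ yields weak convergence of the measures $\nu_n$ (standard, via convergence in probability), and handling the mean term $\sum_k m_k$ through the Cauchy criterion rather than absolute convergence, since \eqref{eq-thm-1-3-2} only guarantees conditional convergence of the series of means. Everything else is the second-moment computation above, and the finiteness of all quantities is automatic because each $A_k$ is a finite set.

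An equivalent route avoids probability by working with the Fourier transforms $\wh{\nu_n}(\xi) = \prod_{k=1}^n \wh{\delta_{A_k}}(\xi)$: the expansion $\wh{\delta_{A_k}}(\xi) = 1 - 2\pi i\, \xi\cdot m_k + O\bigl(|\xi|^2 \max\set{|a|^2:a\in A_k}\bigr)$ shows that $\sum_k \bigl(\wh{\delta_{A_k}}(\xi)-1\bigr)$ converges with square-summable terms, uniformly on compact sets, so the infinite product converges to a limit continuous at the origin; Lévy's continuity theorem then produces the probability-measure limit $\nu$. Here the linear term is precisely where \eqref{eq-thm-1-3-2} enters and the remainder is controlled by \eqref{eq-thm-1-3-1}, mirroring the two hypotheses in the $L^2$ argument.
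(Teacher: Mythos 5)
Your proof is correct, but it follows a genuinely different route from the paper's. The paper deduces the theorem from Kolmogorov's three-series theorem (Theorem~\ref{three-series-theorem}, via Jessen--Wintner): it truncates each $\delta_{A_k}$ at radius $r=1$, observes that \eqref{eq-thm-1-3-1} forces $\max\set{|a|: a\in A_k}\to 0$ so that the truncation is eventually vacuous ($\mu_{k,r}=\mu_k$ for large $k$), and then checks that the three series of tail masses, means, and variances converge --- the mean and variance estimates being exactly your $m_k$ and $\E|Y_k|^2 \le \max\set{|a|^2: a\in A_k}$. You bypass the three-series theorem entirely: since each $X_k$ is finitely supported, all moments exist without any truncation, and your orthogonality identity $\E|S_n-S_m|^2=\sum_{k=m+1}^{n}\E|Y_k|^2+\bigl|\sum_{k=m+1}^{n}m_k\bigr|^2$ shows that $\set{S_n}$ is Cauchy in $L^2$, whence convergence in probability and in distribution, which is precisely the weak convergence of $\set{\nu_n}$. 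This is more elementary and self-contained: the sufficiency half of the three-series theorem ordinarily needs a maximal inequality (or martingale argument) to produce almost-sure convergence, whereas you need only the soft implication from $L^2$-convergence to distributional convergence; your argument also makes transparent where each hypothesis enters, with \eqref{eq-thm-1-3-1} controlling the variance terms and \eqref{eq-thm-1-3-2} --- correctly handled through the Cauchy criterion, since it may converge only conditionally --- controlling the mean terms. What the paper's route buys is economy across the section: the same truncation-plus-three-series machinery, where truncation is genuinely indispensable, also drives the proof of Theorem~\ref{weak-convergence-infinite-convolution}, so one cited tool yields both results. Your sketched Fourier alternative via L\'evy's continuity theorem is also viable, but as written it is only an outline.
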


The spectrality of infinite convolutions in $\R^d$ is very complicated.
Next, we focus on the spectrality of a class of infinite convolutions generated by Hadamard triples in $\R$ which may not be compactly supported.
We point out that the product of spectral measures in $\R$ gives a spectral measure in higher dimension.

Let $\set{b_k}$ and $\set{N_k}$ be two sequences of positive integers satisfying
\begin{equation}\label{b-k-N-k}
  N_k \ge b_k\ge 2 \;\text{ and } \; b_k \mid N_k.
\end{equation}
Let $\set{B_k}$ be a sequence of finite subsets of nonnegative integers.
We say that $\set{B_k}$ is a sequence of \emph{nearly consecutive digit sets} with respect to $\set{b_k}$ and $\set{N_k}$ if
\begin{equation*}
  B_k \equiv \set{0,1,2,\cdots, b_k-1} \pmod{N_k}
\end{equation*}
for every $k \ge 1$, and
\begin{equation*}
  \sum_{k=1}^\f \frac{1}{b_k} \#\big(B_k \sm \set{0,1,2,\cdots, b_k-1} \big) < \f.
\end{equation*}
We prove that infinite convolutions generated by a sequence of nearly consecutive digit sets are spectral measures.

\begin{theorem}\label{main-theorem}
  Given two sequences $\set{b_k}$ and $\set{N_k}$ of positive integers satisfying \eqref{b-k-N-k}, suppose that $\set{B_k}$ is a sequence of nearly consecutive digit sets with respect to $\set{b_k}$ and $\set{N_k}$.
  Then the infinite convolution $$\mu = \delta_{N_1^{-1} B_1} * \delta_{(N_1N_2)^{-1} B_2} * \cdots * \delta_{(N_1N_2\cdots N_n)^{-1} B_n} * \cdots $$ exists, and moreover, $\mu$ is a spectral measure.
\end{theorem}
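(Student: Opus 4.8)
The plan is to get existence from Theorem~\ref{weak-convergence-infinite-convolution} and spectrality from the Hadamard structure, proving completeness through an equi-positivity estimate in which the ``nearly consecutive'' hypothesis is exactly what is needed. For existence, write $M_k = N_1N_2\cdots N_k$ and $A_k = M_k^{-1}B_k$, so $\#A_k = b_k$ and each $a\in A_k$ has the form $a = M_k^{-1}(j+c_jN_k)$ with $j\in\set{0,\dots,b_k-1}$ and $c_j\ge 0$, where $c_j = 0$ exactly when $j\in B_k$. I would split the sum in \eqref{eq-thm-1-1} according to whether $c_j = 0$. For the ``base'' terms I use $\frac{|a|}{1+|a|}\le |a|$ together with $\frac{1}{b_k}\sum_{j=0}^{b_k-1}\frac{j}{M_k} = \frac{b_k-1}{2M_k}\le\frac{1}{2M_{k-1}}$, which is summable since $M_{k-1}\ge 2^{k-1}$. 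For the ``extra'' terms ($c_j\ge1$, whose values may be large, which is what makes $\mathrm{supp}\,\mu$ noncompact) I use instead $\frac{|a|}{1+|a|}\le 1$, so their total contribution is at most $\sum_k b_k^{-1}\#(B_k\sm\set{0,\dots,b_k-1})<\f$. Thus \eqref{eq-thm-1-1} holds and Theorem~\ref{weak-convergence-infinite-convolution} gives $\mu$; it is precisely the boundedness of $t\mapsto t/(1+t)$ that lets the large extra digits contribute only through their \emph{count}.

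For the spectrum, since $b_k\mid N_k$ I set $L_k = \set{0,\,N_k/b_k,\,\dots,\,(b_k-1)N_k/b_k}\sse\set{0,\dots,N_k-1}$. Because $B_k\equiv\set{0,\dots,b_k-1}\pmod{N_k}$ and $b_k\mid N_k$, the matrix $b_k^{-1/2}\big(e^{2\pi i b\ell/N_k}\big)_{b\in B_k,\,\ell\in L_k}$ is a conjugate of the $b_k\times b_k$ DFT matrix, so $(N_k,B_k,L_k)$ is a Hadamard triple. I then take
\[
\Lambda = \set{\textstyle\sum_{k=1}^{n} M_{k-1}\ell_k : n\ge1,\ \ell_k\in L_k},\qquad M_0 = 1,
\]
with truncations $\Lambda_n$. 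Writing $m_{B}(t) = \frac{1}{\#B}\sum_{b\in B}e^{2\pi i bt}$, one has $\widehat{\mu}(\xi) = \prod_{k\ge1} m_{B_k}(\xi/M_k)$, and orthogonality of $\set{e_\lambda:\lambda\in\Lambda}$ follows from the Hadamard property by the usual ``smallest differing digit'' computation, using that $m_{B_k}$ has period $1$ and that $m_{B_k}(\ell/N_k) = m_{\set{0,\dots,b_k-1}}(\ell/N_k)$ for integer $\ell$. The representation $\lambda = \sum M_{k-1}\ell_k$ is unique (mixed-radix), so $\Lambda_n$ is an exact spectrum of $\nu_n$ and $\sum_{\beta\in\Lambda_n}|\widehat{\nu_n}(\xi+\beta)|^2 = 1$ for every $\xi$.

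For completeness, factor $\widehat{\mu}(\xi) = \widehat{\nu_n}(\xi)\,\widehat{\nu_{>n}}(\xi/M_n)$, where $\nu_{>n} = \delta_{N_{n+1}^{-1}B_{n+1}}*\delta_{(N_{n+1}N_{n+2})^{-1}B_{n+2}}*\cdots$ is a tail of the same type. Using the $M_n$-periodicity of $\widehat{\nu_n}$ and $\Lambda\subseteq\Z$, the Jorgensen--Pedersen function decomposes as
\[
Q(\xi)=\sum_{\lambda\in\Lambda}|\widehat{\mu}(\xi+\lambda)|^2=\sum_{\beta\in\Lambda_n}|\widehat{\nu_n}(\xi+\beta)|^2\,Q_{>n}\!\big((\xi+\beta)/M_n\big),
\]
with $Q_{>n}$ the analogous function for $(\nu_{>n},\Lambda_{>n})$. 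Since the $\Lambda_n$-weights sum to $1$ and $Q\le1$ by orthonormality, $Q\equiv1$ will follow from the abstract criterion of \cite{Dutkay-Haussermann-Lai-2019,An-Fu-Lai-2019} once the tail family $\set{\nu_{>n}}_n$ is shown to be \emph{equi-positive}, i.e.\ $\inf_n\inf_{|x|\le1/2}|\widehat{\nu_{>n}}(x)|>0$. To verify this I would compare with the consecutive-digit tails $\nu^{c}_{>n}$ (same $N_k,b_k$, but $B_k$ replaced by $\set{0,\dots,b_k-1}$): the uniform bound $|m_{B_k}(t)-m_{\set{0,\dots,b_k-1}}(t)|\le 2 b_k^{-1}\#(B_k\sm\set{0,\dots,b_k-1})$ telescopes to
\[
\sup_{x}\big|\widehat{\nu_{>n}}(x)-\widehat{\nu^{c}_{>n}}(x)\big|\le\sum_{k>n}\frac{2}{b_k}\#\big(B_k\sm\set{0,\dots,b_k-1}\big)=:\rho_n\to0,
\]
while for the consecutive tails the closed form $|m_{\set{0,\dots,b_k-1}}(t)| = b_k^{-1}|\sin(\pi b_k t)/\sin(\pi t)|$, evaluated at $t = x/(N_{n+1}\cdots N_k)$ which stays inside the first lobe because $N_{n+1}\cdots N_k\ge b_k$, gives $|\widehat{\nu^{c}_{>n}}(x)|\ge c_0>0$ uniformly on $|x|\le1/2$. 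Hence $|\widehat{\nu_{>n}}(x)|\ge c_0-\rho_n\ge c_0/2$ for all large $n$, which is the required equi-positivity.

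The main obstacle is exactly this completeness step: the noncompactness of $\mathrm{supp}\,\mu$ means the tail measures $\nu_{>n}$ need not converge weakly and their transforms could \emph{a priori} vanish on the relevant region, breaking equi-positivity. The resolution is that the perturbation away from the consecutive model is controlled in the \emph{spatially uniform} norm by $\rho_n$, and $\rho_n\to0$ is guaranteed precisely by the summability $\sum_k b_k^{-1}\#(B_k\sm\set{0,\dots,b_k-1})<\f$ built into the definition of nearly consecutive digit sets, while the consecutive model is equi-positive by the elementary Dirichlet-kernel estimate. I expect the only delicate points to be the bookkeeping that reduces $Q\equiv1$ to equi-positivity (for which I would follow \cite{Dutkay-Haussermann-Lai-2019}) and the uniform first-lobe estimate for the consecutive tails.
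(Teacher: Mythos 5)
Your proposal is correct and its skeleton is the same as the paper's: existence via Theorem~\ref{weak-convergence-infinite-convolution} with the identical two-part splitting of $B_k$ (bounding $t/(1+t)$ by $t$ on the consecutive digits and by $1$ on the finitely many large ones), the same Hadamard triples $(N_k,B_k,L_k)$ with $L_k=\{0,N_k/b_k,\dots,(b_k-1)N_k/b_k\}$, and spectrality reduced to equi-positivity of the tails $\nu_{>n}$. Where you genuinely differ is in how equi-positivity is verified. Writing $c_k=\#\big(B_k\setminus\{0,1,\dots,b_k-1\}\big)$, you compare $\widehat{\nu}_{>n}$ with the consecutive-digit model in the uniform norm, telescoping the per-factor error $2c_k/b_k$ across the infinite product, and then lower-bound the model by the first-lobe Dirichlet estimate; the paper instead keeps the error inside each factor, proving $|m_{B_k}(\xi)|\ge 1-\tfrac{1}{6}(b_k\pi\xi)^2-\tfrac{2c_k}{b_k}$, and converts the product into an exponential via $1-x\ge e^{-3x}$ on $(0,7/9)$. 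Both estimates rest on exactly the same two ingredients, namely $N_{n+1}\cdots N_k\ge 2^{k-n-1}b_k$ and $\sum_k c_k/b_k<\infty$, so this is a technical variant rather than a different route; your version is arguably more modular, since it isolates the perturbation from the consecutive model in a single $\sup$-norm bound $\rho_n\to 0$.

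Two repairs are needed to make your argument airtight. First, the black box reducing spectrality to equi-positivity must be Theorem~\ref{general-result} (from \cite{LMW}), not the criteria of \cite{Dutkay-Haussermann-Lai-2019,An-Fu-Lai-2019}: those proofs use compact support (uniformly bounded tails with weakly convergent subsequences), which is precisely what fails here, and your sketched $Q$-decomposition with the naive mixed-radix $\Lambda$ does not by itself close the gap, because the points $(\xi+\beta)/M_n$ with $\beta\in\Lambda_n$ are not confined to any bounded set, so the $\lambda'=0$ term of $Q_{>n}$ cannot be controlled by a lower bound for $|\widehat{\nu}_{>n}|$ valid only near the origin; handling this is exactly the content of the adaptive construction behind Theorem~\ref{general-result}. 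Second, your formulation of equi-positivity, $\inf_n\inf_{|x|\le 1/2}|\widehat{\nu}_{>n}(x)|>0$, should be matched to Definition~\ref{def-equipositive}; this is immediate, since your first-lobe computation works verbatim on $[-2/3,2/3]$, after which one takes $\delta=1/6$, $k_x=0$ for $x\in[0,1/2]$ and $k_x=-1$ for $x\in(1/2,1)$, exactly as the paper does.
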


The following corollary is an immediate consequence of Theorem \ref{main-theorem}, which has been proved by An and He in~\cite{An-He-2014}.
\begin{corollary}
  Let $\set{b_k}$ and $\set{N_k}$ be two sequences of positive integers satisfying \eqref{b-k-N-k}, and $B_k = \set{0,1,2,\cdots, b_k-1}$ for each $k \ge 1$.
  Then the infinite convolution $$\mu= \delta_{N_1^{-1} B_1} * \delta_{(N_1N_2)^{-1} B_2} * \cdots * \delta_{(N_1N_2\cdots N_n)^{-1} B_n}* \cdots$$ is a spectral measure.
\end{corollary}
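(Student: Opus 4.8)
The plan is to recognize this corollary as the special case of Theorem~\ref{main-theorem} in which each digit set $B_k$ coincides exactly with the block of consecutive nonnegative integers $\set{0,1,2,\cdots,b_k-1}$. Everything reduces to checking that, under this choice, the sequence $\set{B_k}$ qualifies as a sequence of nearly consecutive digit sets with respect to $\set{b_k}$ and $\set{N_k}$; once this is confirmed, Theorem~\ref{main-theorem} applies verbatim and delivers both the existence of $\mu$ and its spectrality.

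First I would verify the congruence requirement. Since $B_k = \set{0,1,2,\cdots,b_k-1}$ by hypothesis, the relation $B_k \equiv \set{0,1,2,\cdots,b_k-1} \pmod{N_k}$ holds as a literal set identity. Here the constraint $b_k \le N_k$ in \eqref{b-k-N-k} is what makes this meaningful: it ensures that the integers $0,1,\cdots,b_k-1$ represent $b_k$ distinct residues modulo $N_k$, so the reduction modulo $N_k$ does not collapse any two of them.

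Next I would check the summability requirement. Because $B_k$ equals $\set{0,1,2,\cdots,b_k-1}$ on the nose, the difference set $B_k \sm \set{0,1,2,\cdots,b_k-1}$ is empty for every $k$, so that
$$\sum_{k=1}^\f \frac{1}{b_k} \#\big(B_k \sm \set{0,1,2,\cdots,b_k-1}\big) = \sum_{k=1}^\f 0 = 0 < \f.$$
With both defining conditions in hand, $\set{B_k}$ is a sequence of nearly consecutive digit sets, and invoking Theorem~\ref{main-theorem} closes the argument.

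I do not expect any genuine obstacle: the mathematical content is carried entirely by Theorem~\ref{main-theorem}, and this corollary is simply its cleanest specialization, recovering the theorem of An and He~\cite{An-He-2014}. The only point requiring a moment's care is to confirm that the size and divisibility conditions in \eqref{b-k-N-k} render the otherwise trivial congruence well-posed, which the argument above settles.
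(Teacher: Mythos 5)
Your proposal is correct and matches the paper's intent exactly: the paper states this corollary as an immediate consequence of Theorem~\ref{main-theorem}, and your verification that $B_k=\set{0,1,\cdots,b_k-1}$ trivially satisfies both defining conditions of a nearly consecutive digit set (the congruence holds as a set identity, and the exceptional sum vanishes) is precisely the omitted routine check. Nothing further is needed.
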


Let $\mu $ be a Borel probability measure on $\R^d$, the support of $\mu$, denoted by $\mathrm{spt}(\mu)$, is the smallest closed subset with full measure, i.e.,
$$\mathrm{spt}(\mu) = \R^d \sm \bigcup \set{ U\sse \R^d: \text{ $U$ is open, and $\mu(U)=0$}}.$$
We apply Theorem \ref{main-theorem} to construct an example of spectral measure without compact support in $\R$.

\begin{example}\label{expl}
  Let $b_k = (k+1)^2$ and $N_k = 2 b_k$ for each $k \ge 1$.
  We write $$ B_k = \set{ 0,1,\cdots, b_k-2, b_k -1 + N_1 N_2 \cdots N_k } $$ for every $k \ge 1$.
Clearly, $\set{B_k}$ is a sequence of nearly consecutive digit sets with respect to $\set{b_k}$ and $\set{N_k}$.
Hence, by Theorem \ref{main-theorem}, the infinite convolution $$\mu= \delta_{N_1^{-1} B_1} * \delta_{(N_1N_2)^{-1} B_2} * \cdots * \delta_{(N_1N_2\cdots N_n)^{-1} B_n}* \cdots$$
is a spectral measure.
Since $$ \sum_{k=1}^{\f} \frac{\max\set{b: b \in B_k}}{N_1 N_2 \cdots N_k} =\sum_{k=1}^{\f} \left( \frac{b_k -1}{N_1 N_2 \cdots N_k} + 1 \right) =\f, $$
by simple calculation, the support of the measure $\mu$ is \emph{not} compact.
\end{example}

Actually, there are abundant examples of singular spectral measures without compact support, and the dimension of their supports has the intermediate-value property. The similar result for spectral measures with compact support has been proved in \cite{Dai-Sun-2015}.
The intermediate-value property also holds for the entropy of $K$-partitions in any Bernoulli system \cite{Lind-Peres-Schlag-2002}.

\begin{theorem}\label{dimension-theorem}
  For $0\le \alpha \le \beta \le 1$, there exists a singular spectral measure $\mu$ without compact support such that
    $$
    \dim_H \mathrm{spt}(\mu) = \alpha\leq \beta =\dim_P \mathrm{spt}(\mu),
    $$
where $\dim_H$ and $\dim_P$ denote the Hausdorff dimension and packing dimension, respectively.
\end{theorem}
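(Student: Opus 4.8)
The plan is to realize $\mu$ as an infinite convolution generated by a sequence of nearly consecutive digit sets, so that existence and spectrality come for free from Theorem~\ref{main-theorem}. The whole problem then reduces to choosing the integer data $\set{b_k}$, $\set{N_k}$ and the digit sets $\set{B_k}$ so as to control simultaneously the dimensions of $\mathrm{spt}(\mu)$, the noncompactness, and the singularity. Fix $0 \le \alpha \le \beta \le 1$ and abbreviate
$$ s_k = \frac{\log(b_1 b_2 \cdots b_k)}{\log(N_1 N_2 \cdots N_k)} \in (0,1]. $$
If each $B_k$ were exactly $\set{0,1,\ldots,b_k-1}$, the support would be the homogeneous Moran/Cantor set
$$ E_0 = \set{ \sum_{k=1}^{\f} \frac{c_k}{N_1 N_2 \cdots N_k} : c_k \in \set{0,1,\ldots,b_k-1} }, $$
whose level-$k$ approximation consists of $b_1 \cdots b_k$ intervals of length comparable to $(N_1 \cdots N_k)^{-1}$. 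I would impose throughout the growth condition $\log N_{k+1} = o(\log(N_1 \cdots N_k))$, which makes these covers regular on the logarithmic scale; standard dimension theory for homogeneous Moran sets then yields $\dim_H E_0 = \liminf_k s_k$ and $\dim_P E_0 = \limsup_k s_k$. The lower bound for $\dim_H$ is witnessed by the infinite convolution itself, whose lower local dimension equals $\liminf_k s_k$ since it charges each level-$k$ cylinder with mass $(b_1\cdots b_k)^{-1}$; the matching upper bound and the packing statement follow from the regular covers.

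The heart of the argument is to choose $\set{b_k}$ and $\set{N_k}$ (all with $b_k \mid N_k$ and $2 \le b_k \le N_k$) so that $\liminf_k s_k = \alpha$ and $\limsup_k s_k = \beta$. I would do this by an oscillation scheme built from alternating blocks of increasing length. In an \emph{up}-block I take $b_k = N_k$, so numerator and denominator of $s_k$ grow together and $s_k$ is pushed toward $1$; in a \emph{down}-block I take $b_k = 2$ while letting $N_k$ grow (slowly enough to respect the growth condition), so the denominator outpaces the numerator and $s_k$ is pushed toward $0$. By making each block long enough and tuning the values, $s_k$ can be steered so that its upper limit is exactly $\beta$ (attained at the ends of up-blocks) and its lower limit is exactly $\alpha$ (attained at the ends of down-blocks), with the boundary cases $\alpha=0$, $\beta=1$, and $\alpha=\beta$ obtained by degenerating one or both block types. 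This is routine bookkeeping with the ratio $s_k = P_k/Q_k$, where $P_k = \log(b_1\cdots b_k)$ and $Q_k = \log(N_1\cdots N_k)$.

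To force the three remaining properties I perturb the consecutive digit sets exactly as in Example~\ref{expl}. Fixing an infinite but sparse set $\mathcal{J}$ of levels, I replace in each $B_k$ with $k \in \mathcal{J}$ the top digit $b_k - 1$ by $b_k - 1 + N_1 N_2 \cdots N_k$; choosing $\mathcal{J}$ where $b_k$ is large guarantees $\sum_{k \in \mathcal{J}} 1/b_k < \f$, so $\set{B_k}$ remains nearly consecutive, while $\sum_{k\in\mathcal{J}}\frac{\max B_k}{N_1\cdots N_k} = \f$ makes $\mathrm{spt}(\mu)$ unbounded. Singularity follows because the level-$k$ cover of $\mathrm{spt}(\mu)$ has total Lebesgue measure $\prod_{j\le k} b_j/N_j$, which tends to $0$ whenever $\sum_j \log(N_j/b_j) = \f$; this holds automatically when $\alpha < 1$, and in the remaining case $\alpha=\beta=1$ I insert a further sparse set of gap levels with $N_k = 2b_k$, too sparse to disturb $s_k \to 1$ but enough to make the product vanish. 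Finally, the displaced digits only replace $\mathrm{spt}(\mu)$ by a countable union of rescaled translates of Moran sets of the same type; since $\dim_H$ and $\dim_P$ are countably stable and every piece has the same dimensions as $E_0$, we conclude $\dim_H \mathrm{spt}(\mu) = \alpha$ and $\dim_P \mathrm{spt}(\mu) = \beta$.

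The main obstacle is the exact two-sided determination of both dimensions rather than mere estimates. The delicate points are: (i) during up-blocks the basic intervals become contiguous, so the set locally resembles a genuine interval and one must check that the covers still count correctly across intermediate scales, which is precisely what the growth condition $\log N_{k+1} = o(\log(N_1\cdots N_k))$ is designed to control; (ii) the lower bound for the packing dimension, where the upper box dimension does not suffice in general, so one must verify that these homogeneous Moran sets are regular enough that their packing dimension equals the upper box dimension $\limsup_k s_k$; and (iii) the boundary values $\alpha=0$ and $\alpha=\beta=1$, which require either a direct covering argument or the sparse-gap modification above and must be reconciled with the growth condition. Once these are settled, the construction delivers a singular, non-compactly-supported spectral measure with the prescribed dimensions, proving Theorem~\ref{dimension-theorem}.
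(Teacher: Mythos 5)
Your strategy is essentially the paper's: invoke Theorem \ref{main-theorem} for existence and spectrality, displace top digits of nearly consecutive digit sets to destroy compactness, reduce the dimension computation to homogeneous Moran/Cantor sets via countable stability, and steer the ratio $s_k=\log(b_1\cdots b_k)/\log(N_1\cdots N_k)$ by an alternating block construction. The paper implements the steering differently (it fixes $b_k=k^2$ and alternates $N_k=g_\alpha(b_k)$ with $N_k=g_\beta(b_k)$, where $\log n/\log g_\gamma(n)\to\gamma$, over blocks $\ell_j<k\le\ell_{j+1}$ with $\ell_j\log\ell_j/(\ell_{j+1}-\ell_j)\to0$), and it discharges your delicate points (i) and (ii) simply by citing the Feng--Wen--Wu dimension formulas for partial homogeneous Cantor sets, which give both $\dim_H$ and $\dim_P$ exactly; your growth condition $\log N_{k+1}=o(\log(N_1\cdots N_k))$ plays the same role as the paper's bound $N_{k+1}\le b_{k+1}^{1+\log b_{k+1}}$ in killing the correction term $\log N_{k+1}-\log b_{k+1}$ in the Hausdorff formula. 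So none of your points (i)--(iii) is a real obstruction, and your oscillation bookkeeping, while not carried out, is indeed routine.

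The one step that would fail as written is your identification of the support. With displacement $N_1\cdots N_k$ (i.e.\ $+1$ after rescaling), all pieces $K_S$ with $\#S=m$ lie in $[m,m+1]$, there are infinitely many of them, and their union is \emph{not} closed: for instance the point $1$ is the limit of the minimal points $1+(b_j-1)/(N_1\cdots N_j)$ of $K_{\{j\}}$ as $j\to\f$ in $\mathcal{J}$, yet it belongs to no $K_S$. Hence $\mathrm{spt}(\mu)$ is strictly larger than your countable union, and ``every piece has the same dimensions as $E_0$'' does not by itself give the upper bounds for $\dim_H$ and $\dim_P$ of the support. This is repairable---the closure is still contained in the closed set $\bigcup_{m}(m+E_0)$, which has the same dimensions---but it must be argued. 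The paper sidesteps the issue by displacing the top digit by $(N_1\cdots N_k)\cdot k!$ instead of $N_1\cdots N_k$: distinct finite $S$ then have distinct integer parts $\sum_{k\in S}k!$, the pieces are pairwise disjoint, every bounded set meets only finitely many of them, and the union is closed (Proposition \ref{prop-support}). You should either adopt that displacement or add the squeeze argument for the closure; with that fix your construction goes through.
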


The rest of the paper is arranged as follows. In Section \ref{pre}, we recall some definitions and some known results. In Section \ref{weak-convergence}, we study the weak convergence of infinite convolutions, and give the proofs of Theorem \ref{weak-convergence-infinite-convolution}, Corollary \ref{cor_1} and Theorem \ref{weak-convergence-thm1}. We study the spectrality of a class of infinite convolutions and prove Theorem \ref{main-theorem} in Section \ref{spectrality}. In the last Section \ref{intermediate-value}, we construct singular spectral measures without compact support, and give the proof of Theorem \ref{dimension-theorem}.

\section{Preliminaries}\label{pre}

We use $\mcal{P}(\R^d)$ to denote the set of all Borel probability measures on $\R^d$.
For $\mu \in \mcal{P}(\R^d)$, the \emph{Fourier transform} of $\mu$ is given by
$$ \wh{\mu}(\xi) = \int_{\R^d} e^{-2\pi i \xi \cdot x} \D \mu(x). $$

Let $\mu,\mu_1,\mu_2,\cdots \in \mcal{P}(\R^d)$. We say that $\mu_n$ \textit{converges weakly} to $\mu$ if $$\lim_{n \to \f} \int_{\R^d} f(x) \D \mu_n(x) = \int_{\R^d} f(x) \D \mu(x),$$
for all $ f \in C_b(\R^d),$ where $C_b(\R^d)$ is the set of all bounded continuous functions on $\R^d$.
The following well-known theorem characterizes the weak convergence of probability measures.
We refer readers to \cite{Bil03} for details.

\begin{theorem}\label{weak-convergence-theorem}
  Let $\mu,\mu_1,\mu_2,\cdots \in \mcal{P}(\R^d)$. The following statements are equivalent.

  {\rm(i)} $\mu_n$ \textit{converges weakly} to $\mu$;

  {\rm(ii)} For every closed subset $F \sse \R^d$, $\displaystyle \mu(F) \ge \limsup_{n \to \f} \mu_n(F)$;

  {\rm(iii)} For every open subset $U \sse \R^d$, $\displaystyle \mu(U) \le \liminf_{n \to \f} \mu_n(U)$.
\end{theorem}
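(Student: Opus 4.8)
The plan is to establish the three-way equivalence through the loop (i) $\Rightarrow$ (ii) $\Leftrightarrow$ (iii) $\Rightarrow$ (i), relying on standard approximation of indicator functions by bounded continuous functions together with a layer-cake (distribution-function) representation of the integral. First I would dispose of the equivalence (ii) $\Leftrightarrow$ (iii), which is purely set-theoretic: for a closed set $F$ the complement $U = \R^d \sm F$ is open, and since every measure in play is a probability measure, $\mu(F) = 1 - \mu(U)$ and $\mu_n(F) = 1 - \mu_n(U)$. The inequality $\mu(F) \ge \limsup_n \mu_n(F)$ then rearranges exactly into $\mu(U) \le \liminf_n \mu_n(U)$, and symmetrically, so each of (ii) and (iii) follows from the other with no analysis.

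Next, for (i) $\Rightarrow$ (ii), I would approximate the indicator of a closed set $F$ from above by continuous functions. For $\ep > 0$ set
$$ f_\ep(x) = \max\set{0,\, 1 - \ep^{-1} \mathrm{dist}(x,F)}, $$
so that $f_\ep \in C_b(\R^d)$, $0 \le f_\ep \le 1$, $f_\ep \equiv 1$ on $F$, and $f_\ep \downarrow \mathbbm{1}_F$ pointwise as $\ep \downarrow 0$. Since $\mathbbm{1}_F \le f_\ep$, weak convergence yields
$$ \limsup_{n\to\f} \mu_n(F) \le \limsup_{n\to\f} \int_{\R^d} f_\ep \D \mu_n = \int_{\R^d} f_\ep \D \mu, $$
and letting $\ep \downarrow 0$ with monotone convergence sends the right-hand side to $\mu(F)$, giving (ii).

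Finally I would close the loop by proving that (ii), together with its already-established equivalent form (iii), implies (i). Given $f \in C_b(\R^d)$, after replacing $f$ by $(f+c)/C$ for suitable constants I may assume $0 \le f < 1$. The key identity is the layer-cake formula $\int f \D \mu = \int_0^1 \mu(\set{f > t}) \D t$, valid for every probability measure, together with the observation that $\set{f > t}$ is open and $\set{f \ge t}$ is closed for each $t$ by continuity of $f$. Applying (iii) to the open sets and Fatou's lemma in the $t$-variable gives
$$ \int_{\R^d} f \D \mu = \int_0^1 \mu(\set{f > t}) \D t \le \int_0^1 \liminf_{n\to\f} \mu_n(\set{f > t}) \D t \le \liminf_{n\to\f} \int_{\R^d} f \D \mu_n, $$
while applying (ii) to the closed sets $\set{f \ge t}$ and the reverse Fatou inequality (legitimate since the integrands are bounded by $1$ on $[0,1]$) gives the matching upper bound $\limsup_n \int f \D \mu_n \le \int f \D \mu$. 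Squeezing produces $\lim_n \int f \D \mu_n = \int f \D \mu$, which is (i).

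The main obstacle I anticipate is in this last step: making the interchange of $\liminf$ (respectively $\limsup$) with the $t$-integral fully rigorous via Fatou, and correctly pairing the two super-level-set families with open versus closed sets. The delicate point is that $\set{f > t}$ (open) and $\set{f \ge t}$ (closed) differ only on the boundary level $\set{f = t}$; one must note that $\int_0^1 \mu(\set{f > t}) \D t = \int_0^1 \mu(\set{f \ge t}) \D t$ because $\mu(\set{f = t}) > 0$ for at most countably many $t$, so both layer-cake forms represent the same integral. This reconciliation is what allows the lower bound from (iii) and the upper bound from (ii) to meet.
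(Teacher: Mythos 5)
Your proposed proof is correct, but there is nothing in the paper to compare it against: the paper states this result (the portmanteau theorem) as well known and simply refers the reader to Billingsley \cite{Bil03}; no proof is given. Your argument is essentially the standard one from that literature, and every step checks out: the complementation argument for (ii) $\Leftrightarrow$ (iii) is valid because all measures are probability measures; the Lipschitz approximation $f_\ep(x) = \max\{0, 1-\ep^{-1}\mathrm{dist}(x,F)\}$ decreases pointwise to the indicator of the closed set $F$, and dominated (or decreasing monotone) convergence applies since everything is bounded by $1$ and $\mu$ is finite; the normalization $0 \le f < 1$ is harmless because weak convergence is invariant under affine rescaling of $f$ against probability measures; Fatou applies to the nonnegative functions $t \mapsto \mu_n(\{f>t\})$ and reverse Fatou to functions dominated by the constant $1$ on $[0,1]$; and the reconciliation $\int_0^1 \mu(\{f>t\})\D t = \int_0^1 \mu(\{f\ge t\})\D t$ is justified exactly as you say, since the disjoint level sets $\{f=t\}$ can carry positive $\mu$-mass for at most countably many $t$. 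The one respect in which your route differs from the classical textbook treatment is the final step: Billingsley proves (iii) $\Rightarrow$ (i) by partitioning the range of $f$ at finitely many points $t_0 < t_1 < \cdots < t_k$ chosen to avoid the (at most countably many) atoms of the distribution of $f$ under $\mu$, and sandwiching $\int f \D\mu_n$ between Riemann-type sums of the quantities $\mu_n(\{f>t_i\})$; your layer-cake-plus-Fatou argument reaches the same conclusion with less bookkeeping, at the cost of invoking both (ii) and (iii) (which is legitimate, since you have already shown they are equivalent). Either way the equivalence loop closes, so your proof stands as a complete, self-contained substitute for the citation.
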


For $\mu,\nu \in \mcal{P}(\R^d)$, the \emph{convolution} $\mu *\nu$ is the unique Borel probability measure satisfying $$\int_{\R^d} f(x) \D \mu*\nu(x) = \int_{\R^d \times \R^d} f(x+y) \D \mu \times \nu(x,y),$$
for  all $ f \in C_b(\R^d)$.
It is straightforward that $$\wh{\mu *\nu}(\xi) = \wh{\mu}(\xi) \wh{\nu}(\xi).$$

Next we introduce the Hadamard triple, which is a primitive assumption to construct singular spectral measures.

\begin{definition}\label{Hadamard-triple}
  Let $R \in M_d(\Z)$ be a $d \times d$ expansive matrix (i.e., all eigenvalues have modulus strictly greater than $1$) with integer entries.
  Let $B,L \sse \Z^d$ be finite subsets of integer vectors with $\# B = \# L \ge 2$.
  If the matrix $$ \left[ \frac{1}{\sqrt{\# B}} e^{-2 \pi i  (R^{-1}b)\cdot\ell }  \right]_{b \in B, \ell \in L} $$ is unitary,
  we call $(R, B, L)$ a {\it Hadamard triple} in $\R^d$.
\end{definition}

For example, for $d=1$, given $b,N \in \Z$ with $b\ge 2$ and $b \mid N$, let $$B=\set{0,1,\cdots, b-1},\quad L =\set{0,\frac{N}{b}, \frac{2N}{b},\cdots, \frac{(b-1)N}{b} }. $$
Then $(N,B,L)$ is a Hadamard triple in $\R$.

In fact, $(R, B, L)$ is a Hadamard triple in $\R^d$ if and only if the set $L$ is a spectrum of the discrete measure $\delta_{R^{-1} B}$.
See~\cite{Dutkay-Haussermann-Lai-2019} for more details about Hadamard triples.

\section{Weak convergence of convolutions}\label{weak-convergence}

Let $X$ be a random vector on a probability space $(\Omega,\mscr{F},\PP)$. The image measure, denoted by $\mu$, of $\PP$ under $X$ is called the \emph{distribution} of $X$, i.e., for every Borel subset $E \sse \R^d$, $$\mu(E) = \PP(X^{-1}E).$$

For $\mu \in \mathcal{P}(\R^d)$, we define $$ c(\mu) = \int_{\R^d} x \D \mu(x), \quad \mathrm{M}(\mu) = \int_{\R^d} |x - c(\mu)|^2 \D \mu(x). $$
It is easy to check that $$ \mathrm{M}(\mu) = \int_{\R^d} |x|^2 \D \mu(x) - |c(\mu)|^2. $$

For $\mu \in \mathcal{P}(\R^d)$ and $r>0$, we define a new Borel probability measure $\mu_r$ by
\begin{equation}\label{mu_r}
  \mu_r(E) = \mu\big( E \cap B(r) \big) + \mu\big( \R^d \setminus B(r) \big)\delta_0(E)
\end{equation}
for every Borel subset $E \sse \R^d$, where $\delta_0$ denotes the Dirac measure at $0$ and $B(r)$ denotes the closed ball with center at $0$ and radius $r$.

Let $\mu_1, \mu_2, \cdots$ be a sequence of Borel probability measures on $\R^d$.
By the existence theorem of product measures, there exists a probability space $(\Omega, \mathscr{F},\PP)$ and a sequence of independent random vectors $\{ X_k \}_{k=1}^\f$ such that for each $k \ge 1$ the distribution of $X_k$ is $\mu_k$.
Then one may check that $\mu_1 * \mu_2 * \cdots *\mu_n$ is the distribution of $X_1 + X_2 + \cdots + X_n$.
Thus, the existence of infinite convolution $\mu_1 * \mu_2* \cdots $ is equivalent to the convergence of the series $X_1 + X_2 + \cdots $ in distribution.
It is well-known that the convergence in distribution and the almost sure convergence of the sum of independent random vectors are equivalent.
The following theorem is a consequence of Kolmogorov’s three series theorem, or see Theorem 34 in \cite{Jessen-Wintner-1935} for the proof.

\begin{theorem}\label{three-series-theorem}
  Let $\mu_1, \mu_2, \cdots$ be a sequence of Borel probability measures on $\R^d$.
  Fix a constant $r>0$, and let $\mu_{k,r}$ be defined by \eqref{mu_r} for the measure $\mu_k$, $k \ge 1$.
  Then the sequence of convolutions $\set{\mu_1 * \mu_2 * \cdots *\mu_n}$ converges weakly to a Borel probability measure, i.e., the infinite convolution $\mu_1 *\mu_2 *\cdots $ exists, if and only if the following three series all converge:
  $$ (i) \sum_{k=1}^{\f} \mu_k\big( \R^d \setminus B(r)\big),\quad (ii) \sum_{k=1}^{\f} c(\mu_{k,r}), \quad (iii) \sum_{k=1}^{\f} \mathrm{M}(\mu_{k,r}). $$
\end{theorem}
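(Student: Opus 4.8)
The plan is to recast the statement in the language of sums of independent random vectors, where it becomes precisely Kolmogorov's three series theorem. First I would use the construction already recorded above: since each $\mu_k\in\mcal{P}(\R^d)$, the product measure theorem furnishes a probability space $(\Omega,\mscr{F},\PP)$ carrying independent random vectors $\set{X_k}_{k=1}^\f$ with $X_k$ distributed as $\mu_k$, so that $\mu_1*\cdots*\mu_n$ is the distribution of $S_n=X_1+\cdots+X_n$. Since convergence in distribution of $\set{S_n}$ is equivalent to almost sure convergence of the series $\sum_k X_k$ for independent summands (the equivalence noted just above the statement), the existence of the infinite convolution is equivalent to the almost sure convergence of $\sum_{k=1}^\f X_k$.

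Next I would identify the three measure-theoretic quantities with the ingredients of Kolmogorov's theorem. With $r>0$ fixed, set $Y_k = X_k\,\mathbbm{1}_{B(r)}(X_k)$, the truncation of $X_k$ at radius $r$ (so $Y_k=X_k$ when $|X_k|\le r$ and $Y_k=0$ otherwise). A direct check against the definition \eqref{mu_r} shows that $\mu_{k,r}$ is exactly the distribution of $Y_k$: the mass of $\mu_k$ inside $B(r)$ is kept in place, while the mass outside is collapsed onto the origin. Consequently
$$\mu_k(\R^d\sm B(r)) = \PP(|X_k|>r),\qquad c(\mu_{k,r}) = \E[Y_k],\qquad \mathrm{M}(\mu_{k,r}) = \E|Y_k-\E Y_k|^2,$$
so that the series $(i)$, $(ii)$, $(iii)$ coincide term by term with the three series appearing in Kolmogorov's theorem: the tail-probability series, the series of truncated means, and the series of truncated variances (the trace of the covariance in the vector case).

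Finally I would invoke the vector-valued Kolmogorov three series theorem, which asserts that for independent $\set{X_k}$ the series $\sum_k X_k$ converges almost surely if and only if, for some---equivalently every---$r>0$, all three of these series converge. Combined with the two identifications above this is exactly the asserted equivalence, and since the statement fixes a single $r>0$ there is no loss in working with that one radius.

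The dictionary in the middle paragraph is routine. The one genuine point needing care is the passage to $\R^d$: the classical Kolmogorov theorem is stated for real summands, so one must either cite its standard multidimensional form or reduce to coordinates. The reduction is slightly delicate because the norm truncation $\mathbbm{1}_{B(r)}$ does not factor through the coordinates; however, $\mathrm{M}(\mu_{k,r})$ is the sum of the coordinatewise variances of $Y_k$, and a ball of radius $r$ is comparable to a cube, so truncations at comparable radii produce equivalent three-series conditions. Hence convergence of $(i)$--$(iii)$ is equivalent to the convergence of the three series in each coordinate, which is the only step requiring more than bookkeeping and is precisely the content covered by the reference to Theorem 34 of \cite{Jessen-Wintner-1935}.
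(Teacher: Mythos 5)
Your argument is correct and follows exactly the route the paper takes: the paper gives no independent proof of this theorem, but merely sets up the same dictionary between infinite convolutions and sums of independent random vectors and then cites Kolmogorov's three series theorem (in the form of Theorem 34 of \cite{Jessen-Wintner-1935}) for the vector-valued statement. Your identification of $\mu_{k,r}$ as the law of the truncated variable and of $(i)$--$(iii)$ with the tail, mean, and variance series is the intended content, and your remark on the ball-versus-cube truncation is precisely the point absorbed by the cited reference.
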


\begin{proof}[Proof of Theorem \ref{weak-convergence-infinite-convolution}]
Write $\mu_k = \delta_{A_k}$ for each $k \ge 1$.
Fix $r=1$, and let $\mu_{k,r}$ be defined by \eqref{mu_r} for the measure $\mu_k$, $k \ge 1$.
By Theorem \ref{three-series-theorem}, the weak convergence of $\set{\nu_n}$ is equivalent to the convergence of the following three series
  \begin{equation}\label{three-series}
     \sum_{k=1}^{\f} \mu_k\big( \R^d \setminus B(r) \big),\quad \sum_{k=1}^{\f} c(\mu_{k,r}), \quad \sum_{k=1}^{\f} \mathrm{M}(\mu_{k,r}).
  \end{equation}
For each integer $k \ge 1$, we write $A_{k,1} = \{ a \in A_k: |a| \le 1\}$ and $A_{k,2} = \{ a \in A_k: |a| > 1\}$. It is clear that  $A_k=A_{k,1}\cup A_{k,2}$.

We first prove the sufficiency.  Assume that \eqref{eq-thm-1-1} holds.
By the fact  that $\frac{2t}{1+t} >1$ for $t >1$, we have that
\begin{eqnarray*}
\sum_{k=1}^{\f} \mu_k \big( \R^d \setminus B(r) \big) &=& \sum_{k=1}^{\f} \frac{\#A_{k,2}}{\# A_k} \\
&\le& \sum_{k=1}^{\f} \frac{1}{\# A_k} \sum_{a\in A_{k,2}} \frac{2|a|}{1+|a|} \\
&\le& 2\sum_{k=1}^{\f} \frac{1}{\# A_k} \sum_{a\in A_k} \frac{|a|}{1+|a|} \\
&<& \f.
\end{eqnarray*}
Using the fact that $\frac{2}{1+t} \ge 1$ for $0\le t \le 1$, we have that
\begin{eqnarray*}
\sum_{k=1}^{\f} \int_{\R^d} |x| \D \mu_{k,r}(x) & = & \sum_{k=1}^{\f} \frac{1}{\#A_k} \sum_{a \in A_{k,1}} |a| \\
&\le& \sum_{k=1}^{\f} \frac{1}{\#A_k} \sum_{a \in A_{k,1}} \frac{2}{1+|a|} |a|\\
&\le& 2\sum_{k=1}^{\f} \frac{1}{\# A_k} \sum_{a\in A_k} \frac{|a|}{1+|a|} \\
&<& \f.
\end{eqnarray*}
It follows that $$\sum_{k=1}^{\f} |c(\mu_{k,r})| \le \sum_{k=1}^{\f} \int_{\R^d} |x| \D \mu_{k,r}(x) < \f, $$
and $$\sum_{k=1}^{\f} \mathrm{M}(\mu_{k,r}) \le \sum_{k=1}^{\f} \int_{\R^d} |x|^2 \D \mu_{k,r}(x) \le \sum_{k=1}^{\f} \int_{\R^d} |x| \D \mu_{k,r}(x) < \f.$$
Hence, all three series in \eqref{three-series} converge.

Next we prove the necessity. Assume that the three series in \eqref{three-series} are convergent.
Note that $$c(\mu_{k,r}) = \frac{1}{\# A_k} \sum_{a \in A_{k,1}} a. $$
Since all sets $A_k \sse \R_+^d$ , it follows from the convergence of $\sum_{k=1}^{\f} c(\mu_{k,r})$ that
$$\sum_{k=1}^{\f} \frac{1}{\# A_k} \sum_{a \in A_{k,1}} \frac{|a|}{1+|a|} \le \sum_{k=1}^{\f}\frac{1}{\# A_k} \sum_{a \in A_{k,1}} |a| < \f. $$
On the other hand,
$$\sum_{k=1}^{\f} \frac{1}{\# A_k} \sum_{a \in A_{k,2}} \frac{|a|}{1+|a|} \le \sum_{k=1}^{\f}\frac{\# A_{k,2}}{\# A_k} = \sum_{k=1}^{\f} \mu_k\big( \R^d \setminus B(r) \big) < \f. $$
Noting that $A_k=A_{k,1}\cup A_{k,2}$, we immediately have that $$ \sum_{k=1}^{\f} \frac{1}{\# A_k} \sum_{a\in A_k} \frac{|a|}{1+|a|} < \f,$$
that is, \eqref{eq-thm-1-1} holds.

Therefore, the conclusion holds
\end{proof}

\begin{proof}[Proof of Corollary \ref{cor_1}]
By Theorem~\ref{weak-convergence-infinite-convolution}, the sufficiency is straightforward.

For the necessity, we assume that the sequence of convolutions $\{\nu_n\}$ converges weakly to a Borel probability measure.
Let $N= \sup\set{\# A_k: k \ge 1}$.
For each integer $k \ge 1$, we write $m_k = \max\{|a|: a \in A_k\}$ .
By Theorem \ref{weak-convergence-infinite-convolution}, we have that
$$ \frac{1}{N}\sum_{k=1}^{\f} \frac{m_k}{1+m_k} \le \sum_{k=1}^{\f} \frac{1}{\# A_k} \sum_{a\in A_k} \frac{|a|}{1+|a|} < \f.$$
This implies $$\lim_{k \to \f} \frac{m_k}{1+m_k} = 0.$$
Clearly, the sequence  $\{m_k\} $ converges to $0$ as $k$ tends to $\f$. So we may find $k_0 \ge 1$ such that $m_k < 1/2$ for all $k \ge k_0$.
  Thus, we have that $$\sum_{k=k_0}^{\f} m_k \le \frac{3}{2}\sum_{k=k_0}^{\f} \frac{m_k}{1+m_k} < \f,$$
  and it implies that
  $$\sum_{k=1}^{\f} \max\set{|a|: a\in A_k} < \f.$$
Therefore, the necessity holds.
\end{proof}

\begin{proof}[Proof of Theorem~\ref{weak-convergence-thm1}]
  Write $\mu_k = \delta_{A_k}$ for each $k \ge 1$. Fix $r=1$, and let $\mu_{k,r}$ be defined by \eqref{mu_r} for the measure $\mu_k$, $k \ge 1$.

  Since the series in \eqref{eq-thm-1-3-1} converges, we have that $\max\set{|a|: a\in A_k}$ converges to $0$ as $k$ tends to $\f$.
  We choose an integer $k_0\ge 1$ such that $\max\set{|a|: a\in A_k} < 1$ for $k \ge k_0$.
  It follows that $\mu_{k,r} = \mu_k$ for $k \ge k_0$.
  Thus, we have that $$ \sum_{k=k_0}^{\f} \mu_k\big( \R^d \setminus B(r) \big) =0,$$
  and $$ \sum_{k=k_0}^{\f} \mathrm{M}(\mu_{k,r}) =\sum_{k=k_0}^{\f} \mathrm{M}(\mu_k) \le \sum_{k=k_0}^{\f} \int_{\R^d}|x|^2 \D \mu_k(x) \le \sum_{k=k_0}^{\f} \max\set{|a|^2: a\in A_k} < \f.$$
  Note that the series in \eqref{eq-thm-1-3-2} converges in $\R^d$. So, the series
  $$\sum_{k=k_0}^{\f} c(\mu_{k,r}) = \sum_{k=k_0}^{\f} c(\mu_k) = \sum_{k=k_0}^{\f} \frac{1}{\# A_k} \sum_{ a \in A_k} a $$
  also converges in $\R^d$.
  Therefore, the following three series $$ \sum_{k=1}^{\f} \mu_k \big( \R^d \setminus B(r) \big),\quad \sum_{k=1}^{\f} c(\mu_{k,r}), \quad \sum_{k=1}^{\f} \mathrm{M}(\mu_{k,r}) $$ all converge.
  By Theorem \ref{three-series-theorem}, the infinite convolution $\nu$ exists.
\end{proof}

\section{The proof of Theorem \ref{main-theorem}}\label{spectrality}

Let $\set{(N_k,B_k,L_k)}$ be a sequence of Hadamard triples in $\R$. For each integer $n \ge 1$, we write
\begin{equation*}
  \mu_n= \delta_{N_{1}^{-1} B_1} * \delta_{(N_1 N_2)^{-1} B_2} * \cdots * \delta_{(N_1 N_2 \cdots N_n)^{-1} B_n }.
\end{equation*}
\emph{We assume that the sequence $\set{\mu_n}$ converges weakly to a Borel probability measure $\mu$.}
That is, $\mu$ is an infinite convolution, written as
\begin{equation}\label{mu-infinite-convolution}
  \mu = \delta_{N_{1}^{-1} B_1} * \delta_{(N_1 N_2)^{-1} B_2} * \cdots * \delta_{(N_1 N_2 \cdots N_n)^{-1} B_n } * \cdots.
\end{equation}
We rewrite $\mu = \mu_n * \mu_{>n}$,
where $$\mu_{>n} = \delta_{(N_1 N_2 \cdots N_{n+1})^{-1} B_{n+1} } * \delta_{(N_1 N_2 \cdots N_{n+2})^{-1} B_{n+2} } * \cdots$$ is the tail of infinite convolution,
and then we define
\begin{equation}\label{nu-large-than-n}
  \nu_{>n}(\;\cdot\;) = \mu_{>n}\left( \frac{1}{N_1 N_2 \cdots N_n} \; \cdot\; \right).
\end{equation}
In fact, we have that $$\nu_{>n}=\delta_{N_{n+1}^{-1} B_{n+1}} * \delta_{(N_{n+1} N_{n+2})^{-1} B_{n+2}} * \cdots.$$

\begin{definition}\label{def-equipositive}
  We call $\Phi \sse \mcal{P}(\R)$ an equi-positive family if there exists $\ep>0$ and $\delta>0$ such that for $x\in [0,1)$ and $\mu\in \Phi$ there exists an integer $k_{x,\mu} \in \Z$ such that
  $$ |\wh{\mu}(x+y+k_{x,\mu})| \ge \ep,$$
  for all $ |y| <\delta$, where $k_{x,\mu} =0$ for $x=0$.
\end{definition}

The equi-positivity condition was used to study the spectrality of fractal measures with compact support in \cite{An-Fu-Lai-2019,Dutkay-Haussermann-Lai-2019}.
This definition was then generalized in~\cite{LMW} to the current version which is also applicable to study the spectrality of infinite convolutions without compact support.
The following theorem was proved in~\cite{LMW}, and it is the key tool to prove Theorem \ref{main-theorem}.

\begin{theorem}\label{general-result}
  Let $\set{(N_k, B_k , L_k)}$ be a sequence of Hadamard triples in $\R$.
  Suppose that $\mu$ is the infinite convolution defined by \eqref{mu-infinite-convolution}.
  Let $\nu_{>n}$ be defined by \eqref{nu-large-than-n}.
  If there exists a subsequence $\{ n_j \}$ of positive integers such that $\{ \nu_{>n_j} \}$ is an equi-positive family, then $\mu$ is a spectral measure.
\end{theorem}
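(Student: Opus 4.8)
The plan is to produce a spectrum for $\mu$ through the Jorgensen--Pedersen route: use the Hadamard tower to build an orthonormal exponential system, and use the equi-positivity of $\{\nu_{>n_j}\}$ only to force its completeness. Write $M_n=N_1N_2\cdots N_n$. Since $(N_k,B_k,L_k)$ is a Hadamard triple, $L_k$ is a spectrum of $\delta_{N_k^{-1}B_k}$, and this property depends only on $L_k$ modulo $N_k$; hence at level $k$ one is free to replace each digit $\ell\in L_k$ by any integer congruent to it mod $N_k$. Keeping $0$ as an available representative, I form nested tree sets $\Lambda_n=\big\{\sum_{k=1}^n M_{k-1}\ell_k\big\}$ (with $M_0=1$) whose truncations are spectra of $\mu_n$, i.e. $\sum_{\lambda\in\Lambda_n}|\wh{\mu_n}(\xi+\lambda)|^2=1$ for all $\xi$; this identity survives even when the representative of a digit depends on the earlier digits, because the integer masks $M_{B_k}$ are $1$-periodic. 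Put $\Lambda=\bigcup_n\Lambda_n$. Any two of its points lie in a common $\Lambda_n$, whose difference is annihilated by $\wh{\mu_n}$ and hence by $\wh\mu=\wh{\mu_n}\,\wh{\mu_{>n}}$, so $\{e_\lambda\}_{\lambda\in\Lambda}$ is orthonormal in $L^2(\mu)$ and $Q_\Lambda(\xi):=\sum_{\lambda\in\Lambda}|\wh\mu(\xi+\lambda)|^2\le 1$. By Jorgensen--Pedersen, $\Lambda$ is a spectrum iff $Q_\Lambda\equiv 1$; and for an orthonormal system a uniform bound $Q_\Lambda\ge\ep^2>0$ already makes $\{e_\lambda\}$ a Fourier frame, so (an orthonormal frame being an orthonormal basis) it forces $Q_\Lambda\equiv 1$. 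Thus it suffices to build $\Lambda$ with $Q_\Lambda(\xi)\ge\ep^2$ for every $\xi$.

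The key is an exact recursion. Factoring $\mu=\mu_n*\mu_{>n}$ gives $\wh\mu(\eta)=\wh{\mu_n}(\eta)\,\wh{\nu_{>n}}(\eta/M_n)$ by \eqref{nu-large-than-n}. Substituting this into $Q_\Lambda$, regrouping by the first $n$ digits, and combining $1$-periodicity of the masks with the level-wise identity above, one obtains
\[ Q_\Lambda(\xi)=\sum_{\lambda\in\Lambda_n}|\wh{\mu_n}(\xi+\lambda)|^2\,Q^{(n)}\!\Big(\tfrac{\xi+\lambda}{M_n}\Big), \]
where $Q^{(n)}$ is the analogous function attached to the normalized tail $\nu_{>n}$. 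Retaining only the zero term of $Q^{(n)}$ yields
\[ Q_\Lambda(\xi)\ \ge\ \sum_{\lambda\in\Lambda_n}|\wh{\mu_n}(\xi+\lambda)|^2\,\big|\wh{\nu_{>n}}\big(\tfrac{\xi+\lambda}{M_n}\big)\big|^2 , \]
so $Q_\Lambda(\xi)\ge\ep^2$ as soon as all evaluation points $\tfrac{\xi+\lambda}{M_n}$ fall in the good set $G_n:=\{t:|\wh{\nu_{>n}}(t)|\ge\ep\}$. The whole problem is thereby reduced to a covering statement about the $G_n$. I work along the subsequence $n_j$ on which $\{\nu_{>n_j}\}$ is equi-positive with constants $\ep,\delta$: by definition $G_{n_j}$ contains a $\delta$-neighbourhood of $x+k_{x}$ for \emph{every} fractional part $x\in[0,1)$, so its projection to $\R/\Z$ covers the whole circle by windows of length $2\delta$. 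The digit freedom from the first paragraph is exactly what steers the evaluation points into $G_{n_j}$: shifting the top digit $\ell_{n_j}$ by a multiple of $N_{n_j}$ moves $\tfrac{\xi+\lambda}{M_{n_j}}$ by an integer (reaching the correct translate $x+k_x$), while shifting an earlier digit $\ell_k$ moves it by $1/(N_{k+1}\cdots N_{n_j})$, which is $<\delta$ once $n_j$ is large; hence each individual point can be placed within $\delta$ of some $x+k_x\in G_{n_j}$. Building $\Lambda$ as a tree whose digit representatives are chosen by this rule keeps every truncation a spectrum of $\mu_{n_j}$ while driving the points into $G_{n_j}$.

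The main obstacle is to do this \emph{uniformly}. The set $\Lambda$ must be fixed once and for all, whereas $Q_\Lambda(\xi)\ge\ep^2$ is demanded for every $\xi\in\R$; for a fixed $\xi$ I would apply the recursion at a level $n_j=n_j(\xi)$ chosen so that the entire (exponentially large) family $\{\tfrac{\xi+\lambda}{M_{n_j}}:\lambda\in\Lambda_{n_j}\}$ already lies in $G_{n_j}$. Guaranteeing that such a level exists for every $\xi$, after $\Lambda$ has been frozen, is the heart of the matter: the representatives selected at level $n_j$ must simultaneously route \emph{all} branches into $G_{n_j}$ and must remain consistent (nested) across $n_1<n_2<\cdots$, i.e. no mass may escape to frequencies where $|\wh{\nu_{>n_j}}|<\ep$, uniformly in $\xi$. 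It is precisely here that the full strength of equi-positivity---the uniform $\ep,\delta$ over the whole family $\{\nu_{>n_j}\}$ and over all $x\in[0,1)$, not at isolated frequencies---is indispensable, and I expect the bookkeeping that converts the per-point routing into a single nested $\Lambda$ valid for all $\xi$ to be the most delicate step.

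Once $Q_\Lambda(\xi)\ge\ep^2$ is secured on all of $\R$, orthonormality supplies the Bessel (upper) bound $1$ and the pointwise estimate supplies the frame (lower) bound $\ep^2$, so $\{e_\lambda\}_{\lambda\in\Lambda}$ is a Fourier frame for $L^2(\mu)$; being at the same time orthonormal, it is an orthonormal basis, whence $Q_\Lambda\equiv 1$ and $\Lambda$ is a spectrum. Therefore $\mu$ is a spectral measure. Apart from the uniform routing above, the one technical point to check carefully---because $\mu$ need not be compactly supported---is the frame-to-basis passage, for which the orthonormal Bessel bound together with the uniform positivity of $Q_\Lambda$ are exactly what is required.
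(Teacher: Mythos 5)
Your outline does follow the general strategy of the proof this theorem actually has (the paper itself gives no proof --- it quotes the result from \cite{LMW}): the Jorgensen--Pedersen criterion $Q_\Lambda(\xi):=\sum_{\lambda\in\Lambda}|\wh{\mu}(\xi+\lambda)|^2\equiv 1$, a nested tree of spectra $\Lambda_j$ of the finite convolutions $\mu_{n_j}$, integer corrections licensed by the $M_{n_j}$-periodicity of $\wh{\mu}_{n_j}$ (where $M_n=N_1N_2\cdots N_n$), and a uniform bound $Q_\Lambda\ge\ep^2$. But both load-bearing steps of that strategy are missing or wrong in your write-up. The first is the construction itself: you explicitly defer ``the bookkeeping that converts the per-point routing into a single nested $\Lambda$ valid for all $\xi$,'' but that bookkeeping \emph{is} the theorem. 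Moreover, the mechanism you propose is not the one the definition is built for. The corrections must be chosen once, with no reference to $\xi$: for a new element $\lambda$ one writes $\lambda/M_{n_j}=m+x$ with $m\in\Z$, $x\in[0,1)$, and replaces $\lambda$ by $\lambda+M_{n_j}(k_{x}-m)$, so that $\lambda/M_{n_j}=x+k_{x}$ exactly. Then for an arbitrary \emph{fixed} $\xi$ nothing is routed at all: one takes $j$ with $|\xi|<\delta M_{n_j}$, and the $\delta$-window in Definition \ref{def-equipositive} absorbs the perturbation $y=\xi/M_{n_j}$ simultaneously for every $\lambda\in\Lambda_j$, giving $Q_\Lambda(\xi)\ge Q_{\Lambda_j}(\xi)\ge\ep^2\sum_{\lambda\in\Lambda_j}|\wh{\mu}_{n_j}(\xi+\lambda)|^2=\ep^2$. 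Your plan of nudging evaluation points to within $\delta$ of the good set by perturbing earlier digits spends exactly the $\delta$-room that must be reserved for $\xi/M_{n_j}$, and it leaves untouched the genuine difficulty, which you name but do not resolve: the corrections at stage $j+1$ must be made compatible with $\Lambda_j\sse\Lambda_{j+1}$ (nestedness is what makes the union orthonormal, and it is why Definition \ref{def-equipositive} normalizes $k_{x,\mu}=0$ at $x=0$). Carrying out that induction is the actual content of the cited proof.

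Second, your completeness step is not a valid inference. The bound $Q_\Lambda(\xi)\ge\ep^2$ for all $\xi$ is the lower frame inequality only for the individual exponentials $f=e_\xi$, and lower frame bounds do not pass from a generating family to its closed linear span: writing $P$ for the orthogonal projection onto $\ol{\mathrm{span}}\set{e_\lambda:\lambda\in\Lambda}$, knowing $\|Pe_\xi\|^2\ge\ep^2$ for every $\xi$ does not give $\|Pf\|^2\ge\ep^2\|f\|^2$ for finite linear combinations $f$ of exponentials. So you have not produced a Fourier frame, and the step ``orthonormal frame $\Rightarrow$ orthonormal basis'' (which is itself correct) never gets to be applied. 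The implication actually needed --- $\set{e_\lambda}$ orthonormal together with $\inf_{\xi\in\R}Q_\Lambda(\xi)>0$ forces $Q_\Lambda\equiv 1$ --- is a genuine lemma, not soft functional analysis: it is known for compactly supported measures, where the proof exploits the analyticity of $\wh{\mu}$ ({\L}aba--Wang, Jorgensen--Pedersen), and since the whole point of Theorem \ref{general-result} is that $\mu$ need not have compact support, even this step requires a replacement argument, which is part of what \cite{LMW} supplies. You flag it as ``the one technical point to check carefully,'' but as written it is not a check --- it is a second missing proof. In sum, the road map matches the known strategy, but with both of its substantive components absent or invalid, the proposal does not constitute a proof.
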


\begin{proof}[Proof of Theorem \ref{main-theorem}]
  For every $k \ge 1$, let $$L_k = \set{ 0, \frac{N_k}{b_k}, \frac{2 N_k}{b_k}, \cdots, \frac{(b_k-1)N_k)}{b_k} }. $$
  Since $B_k \equiv \set{0,1,\cdots, b_k-1} \pmod{N_k}$, it is obvious that $(N_k,B_k,L_k)$ is a Hadamard triple for each $k \ge 1$.

  For $k \ge 1$, we write
  $$c_k = \#\big(B_k \sm \set{0,1,2,\cdots, b_k-1} \big).$$
  Since $\set{B_k}$ is a sequence of nearly consecutive digit sets with respect to $\set{b_k}$ and $\set{N_k}$, we have
  \begin{equation}\label{sum_fnt}
  \sum_{k=1}^{\f} \frac{c_k}{b_k}<\f.
  \end{equation}
  We divide the set $B_k$ into two parts.
  For every $k \ge 1$, let $B_{k,1} = B_k \cap \set{ 0,1,2, \cdots, b_k -1}$ and $B_{k,2} = B_k \sm \set{0,1,2,\cdots, b_k-1}$. We write $A_k = (N_1 N_2 \cdots N_k)^{-1} B_k$ and
  $$A_{k,1} = (N_1 N_2 \cdots N_k)^{-1} B_{k,1}, \quad A_{k,2} = (N_1 N_2 \cdots N_k)^{-1} B_{k,2}.$$

  First, we prove the existence of the infinite convolution of $\{\delta_{A_k}\}$.
  By Theorem \ref{weak-convergence-infinite-convolution}, it is sufficient to show that
  \begin{equation}\label{sum-finite}
    \sum_{k=1}^{\f} \frac{1}{\# A_k} \sum_{a\in A_{k}} \frac{a}{1+a} < \infty .
  \end{equation}
  Since $A_k=A_{k,1}\cup A_{k,2},$ we divide the summation in \eqref{sum-finite} into two parts and estimate them separately.
  For the first part, we have that
  \begin{eqnarray*}
  \sum_{k=1}^{\f} \frac{1}{\# A_k} \sum_{a\in A_{k,1}} \frac{a}{1+a} &\le& \sum_{k=1}^{\f} \frac{1}{b_k} \sum_{a\in A_{k,1}} a   \\
  &\le& \sum_{k=1}^{\f} \frac{b_k -1}{N_1 N_2 \cdots N_k} \\
  &\le& \sum_{k=1}^{\f} \frac{1}{2^{k-1}} \\
  &<& \infty.
  \end{eqnarray*}
For the second part, by \eqref{sum_fnt}, we have that
\begin{eqnarray*}
\sum_{k=1}^{\f} \frac{1}{\# A_k} \sum_{a\in A_{k,2}} \frac{a}{1+a} &\le& \sum_{k=1}^{\f} \frac{\# A_{k,2}}{\# A_k}  \\
 &\le& \sum_{k=1}^{\f} \frac{c_k}{b_k}\\
  &<&\f.
\end{eqnarray*}
  It follows that \eqref{sum-finite} holds. Therefore, the infinite convolution $\mu$ exists.

  Next, we show that $\mu$ is a spectral measure.
  For a finite subset $B \sse \Z$, we write $$m_B(\xi) = \frac{1}{\# B} \sum_{b \in B} e^{-2\pi i b\xi }.$$
  It is clear that $m_B(\xi)$ is the Fourier transform of the discrete measure $\delta_B$.
  For each $k \ge 1$, we have that for $\xi\not\in \Z$,
  \begin{eqnarray*}
    \left| m_{B_k}(\xi) \right| &\ge&  \left| \frac{1}{b_k} \sum_{j=0}^{b_k-1} e^{-2\pi i j \xi} \right| - \left| \frac{1}{b_k} \sum_{j=0}^{b_k-1} e^{-2\pi i j \xi} - \frac{1}{b_k} \sum_{b \in B_k} e^{-2\pi i b \xi} \right| \\
    & \ge& \left| \frac{1}{b_k} \sum_{j=0}^{b_k-1} e^{-2\pi i j \xi} \right| - \frac{2 \# B_{k,2}}{b_k} \\
    &=&  \frac{1}{b_k} \left| \frac{\sin(b_k \pi \xi)}{\sin(\pi \xi)} \right| - \frac{2c_k}{b_k} .
  \end{eqnarray*}
  By applying inequalities $|\sin x|\leq |x|$ and $\left| \frac{\sin x}{x} \right| \ge 1 - \frac{x^2}{6}$, we obtain that $$\frac{1}{b_k} \left| \frac{\sin(b_k \pi \xi)}{\sin(\pi \xi)} \right| \ge \left| \frac{\sin(b_k \pi \xi)}{ b_k \pi \xi } \right| \ge  1 - \frac{(b_k \pi \xi)^2}{6}. $$
  It follows that for $\xi \not\in \Z$, $$ \left| m_{B_k}(\xi) \right| \ge 1 - \frac{(b_k \pi \xi)^2}{6} - \frac{2c_k}{b_k}. $$
  Note that $m_{B_k}(\xi)=1$ for $\xi \in \Z$.
  Thus, for every $k \ge 1$, we have
  \begin{equation}\label{lower-bound}
    \left| m_{B_k}(\xi) \right| \ge 1 - \frac{(b_k \pi \xi)^2}{6} - \frac{2c_k}{b_k}
  \end{equation}
  for all $\xi \in \R$.

Since the series in \eqref{sum_fnt} is convergent, we may find $n_0 \ge 1$ such that for each $k > n_0$, $$\frac{2c_k}{b_k} < \frac{7}{9}- \frac{2\pi^2}{27}.$$
Note that $\nu_{>n} = \delta_{N_{n+1}^{-1} B_{n+1}} * \delta_{(N_{n+1} N_{n+2})^{-1} B_{n+2} } * \cdots$.
For $n \ge n_0$ and $\xi \in [-2/3,2/3]$, by \eqref{lower-bound}, we have that
  \begin{align*}
    \left| \wh{\nu}_{>n}(\xi) \right| & = \left| \prod_{k=1}^{\f} m_{B_{n+k}}\left( \frac{\xi}{N_{n+1} N_{n+2} \cdots N_{n+k} } \right) \right| \\
    & \ge \prod_{k=1}^{\f} \left[ 1- \frac{1}{6} \left( \frac{ b_{n+k} \pi \xi}{N_{n+1} N_{n+2} \cdots N_{n+k}}  \right)^2  - \frac{2c_{n+k}}{b_{n+k}}\right] \\
    & \ge \prod_{k=1}^\f \left( 1- \frac{2\pi^2}{27} \frac{1}{ 4^{k-1} }  - \frac{2 c_{n+k}}{b_{n+k}}\right).
  \end{align*}
Since for every $k \ge 1$, $$0< \frac{2\pi^2}{27} \frac{1}{ 4^{k-1} }  + \frac{2 c_{n+k}}{b_{n+k}} < \frac{7}{9},$$
by the inequality $1- x \ge e^{-3x}$ for $0< x < 7/9$,  we conclude that
  \begin{eqnarray*}
  \left| \wh{\nu}_{>n}(\xi) \right| &\ge& \exp\left( - \frac{2\pi^2}{9}\sum_{k=1}^{\f} \frac{1}{4^{k-1}} - 6 \sum_{k=1}^{\f} \frac{c_{n+k}}{b_{n+k}} \right)  \\
  &\ge& \exp\left( - \frac{8\pi^2}{27} - 6 \sum_{k=1}^{\f} \frac{c_{k}}{b_{k}} \right)
 \end{eqnarray*}
  for $n \ge n_0$ and $\xi \in [-2/3,2/3]$.

  Let $$ \ep = \exp\left( - \frac{8\pi^2}{27} - 6 \sum_{k=1}^{\f} \frac{c_{k}}{b_{k}} \right), $$ and $\delta=1/6$.
  We define $k_x = 0$ for $x\in [0,1/2]$ and $k_x = -1$ for $x\in [1/2,1)$.
  Therefore, for $x \in [0,1)$ and $n \ge n_0$, we have that $$\left| \wh{\nu}_{>n}(x + y + k_x) \right| \ge \ep$$ for all $|y|< \delta$, since $|x+y+k_x| < 2/3$.
  This implies that the family $\set{\nu_{>n}}_{n=n_0}^\f$ is equi-positive.
  By Theorem \ref{general-result}, the infinite convolution $\mu$ is a spectral measure.
\end{proof}

\section{Intermediate-value property of dimensions}\label{intermediate-value}

In this section, we always assume that $\{b_k\}$ and $\{N_k\}$ are two sequences of positive integers satisfying
\begin{equation*}
  N_k \ge b_k\ge 2, \quad\; b_k \mid N_k,
\end{equation*}
 and $$\sum_{k=1}^{\f} \frac{1}{b_k} < \f.$$
For each $k \ge 1$,  we write
$$B_k = \set{ 0,1, \cdots, b_k -2, b_k -1 + (N_1N_2 \cdots N_k) \cdot(k!)},$$
where $k! = k \cdot (k-1) \cdots 2 \cdot 1$.

Clearly, $\{B_k\}$ is a sequence of nearly consecutive digit sets with respect to $\set{b_k}$ and $\set{N_k}$.
Therefore, by Theorem \ref{main-theorem}, the infinite convolution
\begin{equation}\label{def-mu}
\mu  = \delta_{N_1^{-1} B_1} * \delta_{(N_1N_2)^{-1} B_2} * \cdots * \delta_{(N_1N_2\cdots N_k)^{-1} B_k} * \cdots
\end{equation}
is a spectral measure.

For every $k \ge 1$, we write $A_k = (N_1 N_2 \cdots N_{k})^{-1} B_k$, i.e.,
$$A_k= \set{ 0, \frac{1}{N_1 N_2 \cdots N_{k}}, \cdots, \frac{b_k-2}{N_1 N_2 \cdots N_{k}}, k! + \frac{b_k-1}{N_1 N_2 \cdots N_{k}} }.$$
For a finite subset $S \sse \N$, we define
$$K_S = \set{ \sum_{k \in S} \left( k! + \frac{b_k-1}{N_1 N_2 \cdots N_{k}} \right) + \sum_{k \not\in S}\frac{\ep_k}{N_1 N_2 \cdots N_{k}}: \ep_k \in \set{0,1,\cdots,b_k -2} \text{ for } k \not\in S }.$$
It is easy to verify that $K_S$ is compact
and $$K_S \sse \left[ \sum_{k \in S} k!, 1+ \sum_{k \in S} k! \right).$$

\begin{proposition}\label{prop-support}
Let $\mu$ be given by~\eqref{def-mu}. Then the support of $\mu$ is the disjoint union of all sets $K_S$ where $S\sse \N$ is a finite subset, i.e.,
  \begin{equation}\label{support-of-mu}
    \mathrm{spt}(\mu) = \bigcup_{S \sse \N \text{ finite }} K_S.
  \end{equation}
\end{proposition}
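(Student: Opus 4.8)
The plan is to realize $\mu$ as the distribution of the random sum $X = \sum_{k=1}^{\f} X_k$, where $\set{X_k}$ are independent random variables with $X_k$ distributed according to $\delta_{A_k}$, exactly as set up at the start of Section~\ref{weak-convergence}. For each $k$, the variable $X_k$ takes the ``big-jump'' value $k! + \frac{b_k-1}{N_1\cdots N_k}$ with probability $1/b_k$, and one of the ``small'' values $\frac{j}{N_1\cdots N_k}$ with $0 \le j \le b_k - 2$, each again with probability $1/b_k$. Since $b_k \le N_k$, the small values are bounded by $\frac{b_k}{N_1\cdots N_k}\le 2^{-(k-1)}$, so the small contributions always sum absolutely and only the factorial terms can obstruct convergence. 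Throughout I use that $\mathrm{spt}(\mu)$ is the smallest closed set of full $\mu$-measure, and I will establish the two inclusions in \eqref{support-of-mu} together with the disjointness and the closedness of the right-hand side.

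For the inclusion $\bigcup_S K_S \sse \mathrm{spt}(\mu)$, I fix $x \in K_S$ and $\ep>0$ and split $x = s + t$ into its jump part $s=\sum_{k\in S}\big(k!+\frac{b_k-1}{N_1\cdots N_k}\big)$ and its tail $t = \sum_{k\notin S}\frac{\ep_k}{N_1\cdots N_k}$. Choosing $n \ge \max S$ large, I will lower-bound $\mu(B(x,\ep))$ by the probability of the event on which $X_k$ realizes the prescribed value of $x$ for every $k\le n$ and $X_k$ avoids its big jump for every $k>n$; on this event $X$ lies within $\ep$ of $x$. This probability equals $\big(\prod_{k\le n} b_k^{-1}\big)\prod_{k>n}(1-b_k^{-1})$, which is strictly positive precisely because $\sum_k b_k^{-1}<\f$ forces the infinite product to converge to a positive number. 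Hence every neighbourhood of $x$ carries positive mass, so $x\in\mathrm{spt}(\mu)$.

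For the reverse inclusion I first show $\mu\big(\bigcup_S K_S\big)=1$. Since the big-jump event at index $k$ has probability $b_k^{-1}$ and $\sum_k b_k^{-1}<\f$, the Borel--Cantelli lemma gives that almost surely only finitely many big jumps occur; on that full-measure event the set $S$ of jump indices is finite and $X\in K_S$ by construction. It then remains to prove that $U:=\bigcup_{S\sse\N\text{ finite}}K_S$ is \emph{closed}, for then the smallest closed full-measure set satisfies $\mathrm{spt}(\mu)\sse U$, which combined with the first inclusion yields equality. Both closedness and disjointness will follow from the separation estimate $\sum_{k=1}^{m-1}k! < m!$, which makes $S\mapsto \sum_{k\in S}k!$ an injection of finite subsets into $\Z$ and forces the unit intervals $\big[\sum_{k\in S}k!,\,1+\sum_{k\in S}k!\big)\supseteq K_S$ to be pairwise disjoint; disjointness of the $K_S$ is then immediate. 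For closedness, a convergent sequence $x^{(j)}\in K_{S_j}$ is bounded, say by $M$, so $\sum_{k\in S_j}k!\le M$ forces $S_j\sse\set{1,\dots,m_0}$ for a fixed $m_0$; thus the $S_j$ take only finitely many values, some $S$ recurs infinitely often, and compactness of that single $K_S$ places the limit in $K_S\sse U$.

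I expect the main obstacle to be the closedness of the infinite union $U$: a priori the sets $K_S$ could accumulate onto points belonging to no single $K_S$, and it is the factorial growth of the jump heights (through $\sum_{k<m}k!<m!$) that quarantines each $K_S$ inside its own unit interval and rules this out. The remaining ingredients---compactness of each individual $K_S$ as a continuous image of the compact product space $\prod_{k\notin S}\set{0,1,\dots,b_k-2}$, and the uniform summability $\sum_k 2^{-(k-1)}<\f$ controlling the small digits---are routine and can be verified directly.
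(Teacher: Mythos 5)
Your argument is correct, and its topological skeleton --- disjointness and closedness of $K=\bigcup_S K_S$ via the factorial separation $\sum_{k=1}^{m-1}k!<m!$, each $K_S$ compact and confined to its own unit interval, a bounded sequence therefore meeting only finitely many $K_S$ --- coincides with the paper's. Where you genuinely diverge is in the measure-theoretic steps. You pass to the almost-sure realization of $\mu$ as the law of the random series $\sum_k X_k$ (legitimate, since Section~\ref{weak-convergence} records the equivalence of weak convergence of the partial convolutions with a.s.\ convergence of the partial sums) and then get $\mu(K)=1$ from Borel--Cantelli applied to the big-jump events of probability $b_k^{-1}$, and $\mu\big(B(x,\ep)\big)>0$ from the probability of the cylinder event fixing the first $n$ digits and forbidding later jumps. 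The paper instead stays at the level of weak limits: it notes that every partial sum $a_1+\cdots+a_n$ already lies in some $K_S$ with $S\sse\{1,\dots,n\}$, so $\mu_n(K)=1$, and invokes the portmanteau inequality $\mu(K)\ge\limsup_n\mu_n(K)$ of Theorem~\ref{weak-convergence-theorem}; for the reverse inclusion it builds an explicit compact set $K_{x,\delta}\sse(x-\delta,x+\delta)$, computes $\mu_n(K_{x,\delta})=\frac{1}{b_1\cdots b_{\ell_0}}\prod_{k>\ell_0}\left(1-b_k^{-1}\right)$, and applies portmanteau again --- the same positive number your event probability produces. Your route buys a one-line proof of $\mu(K)=1$ at the cost of importing the random-series representation; the paper's route needs only the portmanteau theorem it has already stated, and keeps every estimate at the level of the finite discrete measures $\mu_n$ where it is an exact count.
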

\begin{proof}
Let $$K = \bigcup_{S \sse \N \text{ finite }} K_S. $$
First, we show that the union is disjoint.
By induction, it is easy to check the following inequality $$\sum_{k=1}^{n} k! < (n+1)!$$ for each $n \ge 1$.
Let  $S,T \sse \N$ be two finite subsets such that  $S \ne T$.  It is obvious that
$$\sum_{k\in S} k! \ne \sum_{k \in T} k!.$$
Immediately,   we have that  $K_S \cap K_T = \emptyset$. This implies that the union in~\eqref{support-of-mu} is disjoint.

Next, we prove that the set $K$ is closed.
  Arbitrarily choose a sequence $\{x_j \}_{j=1}^\f \sse K$ such that $x_j$ converges to  $x$ as $j$ tends to $ \f$.
  Since $\{x_j \}_{j=1}^\f$ is bounded, there exists $n\ge 1$ such that $\{x_j \}_{j=1}^\f \sse [0, n!]$.
  Thus, we have that $$\{x_j \}_{j=1}^\f \sse \bigcup_{S \sse \{1,2,\cdots,n\}} K_S.$$
  Since the set $K_S$ is compact for every finite subset $S \sse \N$,  we have that $$x \in \bigcup_{S \sse \{1,2,\cdots,n\}} K_S \sse K.$$
  This implies that $K$ is closed.

  Finally we verify that $\mathrm{spt}(\mu) = K$.
  For $n \ge 1$, we write $\mu_n=\delta_{A_1} * \delta_{A_2}* \cdots *\delta_{A_n}$.
  For every Borel subset $B \sse \R$, we have that
  \begin{equation}\label{measure-mu-n}
    \mu_n(B) = \frac{\#\set{ (a_1,a_2,\cdots,a_n) \in A_1 \times A_2 \times \cdots \times A_n: a_1 + a_2 + \cdots + a_n \in B }}{b_1 b_2 \cdots b_n}.
  \end{equation}
  Since $ a_1 + a_2 + \cdots + a_n \in K$ for every $n$-tuple $(a_1, a_2,\cdots,a_n) \in A_1 \times A_2 \times \cdots \times A_n$, it is obvious that  $\mu_n(K)=1$ for each $n \ge 1$.
  Since $\{\mu_n\}$ converges weakly to $\mu$ and $K$ is closed, by Theorem \ref{weak-convergence-theorem}   (ii), we have that $\mu(K)=1$. It follows that $\mathrm{spt}(\mu) \sse K$.

  It remains to show that $K\sse \mathrm{spt}(\mu) $. Fix a finite subset $S\sse \N$ and $x\in K_S$.
  We write $$x = \sum_{k \in S} k! + \sum_{k=1}^{\f} \frac{\eta_k}{N_1 N_2 \cdots N_k},$$
  where $\eta_k \in \set{0,1,\cdots, b_k -1}$ for each $k \ge 1$.
  Let $k_0 = \max\set{k: k\in S}$. For each $\delta >0$, there exists $\ell_0 > k_0$ such that
  $$\sum_{k=\ell_0 + 1}^{\f} \frac{b_k-1}{N_1 N_2 \cdots N_k} < \delta.$$
  We write $$\wt{x} = \sum_{k \in S} k! + \sum_{k=1}^{\ell_0} \frac{\eta_k}{N_1 N_2 \cdots N_k}
  $$
  and $$K_{x,\delta} = \wt{x} + \set{ \sum_{k=\ell_0 + 1}^{\f} \frac{\ep_k}{N_1 N_2 \cdots N_k}: \ep_k \in \set{0,1,\cdots,b_k-2} \text{ for } k \ge \ell_0+1 }. $$
  It is clear that $K_{x,\delta}$ is compact, and $K_{x,\delta} \sse (x-\delta, x+\delta)$.
  For $n > \ell_0$, by \eqref{measure-mu-n}, we have that
  $$\mu_{n}(K_{x,\delta}) = \frac{1}{b_1 b_2 \cdots b_{\ell_0}} \prod_{k=\ell_0 +1}^{n} \left( 1- \frac{1}{b_k} \right).$$
  By Theorem \ref{weak-convergence-theorem} (ii), we obtain that
  $$\mu(K_{x,\delta}) \ge \limsup_{n \to \f} \mu_n(K_{x,\delta}) = \frac{1}{b_1 b_2 \cdots b_{\ell_0}} \prod_{k=\ell_0 +1}^{\f} \left( 1- \frac{1}{b_k} \right) >0 ,$$
  and it follows that $$\mu\big( (x-\delta,x+\delta) \big) \ge \mu(K_{x,\delta})  >0.$$
  Since $\delta$ is arbitrary, we have that $x\in \mathrm{spt}(\mu)$.
  It follows that $K_S \sse \mathrm{spt}(\mu)$, and thus, $K \sse \mathrm{spt}(\mu)$.
  Therefore, we conclude that $\mathrm{spt}(\mu) = K$.
\end{proof}

The following theorem is the well-known Stolz-Ces\`{a}ro theorem which will be used to calculate dimensions.
\begin{theorem}\label{stolz-cesaro-theorm}
  Let $\set{\beta_k}_{k=1}^\f$ be a sequence of positive real numbers satisfying $\sum_{k=1}^{\f} \beta_k = \f$, and let $\set{\alpha_k}_{k=1}^\f$ be any given sequence of real numbers.
  Then we have that $$\liminf_{n \to \f} \frac{\alpha_1 + \alpha_2 + \cdots + \alpha_n}{\beta_1 + \beta_2 + \cdots + \beta_n} \ge \liminf_{k \to \f} \frac{\alpha_k}{\beta_k}$$
  and $$\limsup_{n \to \f} \frac{\alpha_1 + \alpha_2 + \cdots + \alpha_n}{\beta_1 + \beta_2 + \cdots + \beta_n} \le \limsup_{k \to \f} \frac{\alpha_k}{\beta_k}.$$
  In particular, if the limit of the sequence $\set{\alpha_k / \beta_k}_{k=1}^\f$ exists, then
  $$ \lim_{n\to \f} \frac{\alpha_1 + \alpha_2 + \cdots + \alpha_n}{\beta_1 + \beta_2 + \cdots + \beta_n} = \lim_{k \to \f} \frac{\alpha_k}{\beta_k}. $$
\end{theorem}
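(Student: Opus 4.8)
The plan is to treat the two one-sided estimates separately and exploit their duality: replacing $\alpha_k$ by $-\alpha_k$ converts the $\limsup$-inequality into the $\liminf$-inequality (since $\liminf(-x) = -\limsup x$), so it suffices to prove the upper bound $\limsup_{n\to\f} A_n/B_n \le \limsup_{k\to\f} \alpha_k/\beta_k$, where I abbreviate $A_n = \alpha_1 + \cdots + \alpha_n$ and $B_n = \beta_1 + \cdots + \beta_n$. The hypotheses $\beta_k > 0$ and $\sum_{k=1}^{\f}\beta_k = \f$ guarantee that $\set{B_n}$ is strictly increasing with $B_n \to \f$; this divergence is the engine of the whole argument.

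For the upper bound, set $L = \limsup_{k\to\f} \alpha_k/\beta_k$. If $L = +\f$ there is nothing to prove, so assume $L < +\f$ and fix an arbitrary real $M > L$. By the definition of $\limsup$ there is an index $K$ with $\alpha_k/\beta_k < M$, equivalently $\alpha_k < M\beta_k$ (here positivity of $\beta_k$ is used), for all $k > K$. Summing this over $K < k \le n$ and adding $A_K$ gives $A_n < A_K + M(B_n - B_K)$ for all $n > K$. Dividing by $B_n > 0$ yields
$$\frac{A_n}{B_n} < M + \frac{A_K - M B_K}{B_n}.$$
Since the numerator $A_K - M B_K$ is a fixed constant once $M$ (hence $K$) is chosen, and $B_n \to \f$, letting $n \to \f$ gives $\limsup_{n\to\f} A_n/B_n \le M$. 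As $M > L$ was arbitrary, $\limsup_{n\to\f} A_n/B_n \le L$, as required; the dual inequality $\liminf_{n\to\f} A_n/B_n \ge \liminf_{k\to\f} \alpha_k/\beta_k$ follows by applying this conclusion to the sequence $\set{-\alpha_k}$.

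The final ``in particular'' assertion is then immediate: if $\lim_{k\to\f}\alpha_k/\beta_k = \ell$ exists, then $\liminf_{k} \alpha_k/\beta_k = \limsup_{k}\alpha_k/\beta_k = \ell$, and chaining the two inequalities already proved gives $\ell \le \liminf_{n} A_n/B_n \le \limsup_{n} A_n/B_n \le \ell$, so the limit exists and equals $\ell$. I do not expect a genuine obstacle here, since the statement is the classical Stolz--Ces\`{a}ro theorem and the argument is elementary; the only points needing care are the degenerate case $L = +\f$ (disposed of by triviality), the use of $\beta_k > 0$ to preserve the direction of the inequality when clearing denominators, and the observation that the divergence $B_n \to \f$ supplied by $\sum_{k=1}^{\f}\beta_k = \f$ is precisely what renders the fixed tail term $(A_K - M B_K)/B_n$ negligible in the limit.
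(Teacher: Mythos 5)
Your proof is correct: the one-sided estimate is established by the standard argument (fix $M>\limsup_k\alpha_k/\beta_k$, sum the tail inequality $\alpha_k<M\beta_k$, and use $B_n\to\infty$ to kill the constant $(A_K-MB_K)/B_n$), the degenerate cases $L=+\infty$ and $L=-\infty$ are both handled by your setup, and the duality $\alpha_k\mapsto-\alpha_k$ correctly yields the $\liminf$ bound. The paper itself offers no proof to compare against --- it states this as the well-known Stolz--Ces\`{a}ro theorem and simply cites it as classical --- so your elementary argument supplies exactly the standard justification the authors omitted.
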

Next, we state the dimension formula  for the support of $\mu$.

\begin{proposition}\label{dimension-proposition}
Let $\mu$ be given by~\eqref{def-mu}. Then we have that
  $$\dim_H \mathrm{spt}(\mu) = \liminf_{k \to \f} \frac{\log b_1 b_2 \cdots b_k}{\log N_1 N_2 \cdots N_k N_{k+1} - \log b_{k+1}},$$
and
$$\dim_P \mathrm{spt}(\mu) = \limsup_{k \to \f} \frac{\log b_1 b_2 \cdots b_k}{\log N_1 N_2 \cdots N_k }.$$
\end{proposition}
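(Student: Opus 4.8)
The plan is to combine the explicit description of the support in Proposition~\ref{prop-support} with the countable stability of both dimensions. Write $P_k = N_1 N_2 \cdots N_k$. By Proposition~\ref{prop-support}, $\mathrm{spt}(\mu) = \bigcup_{S} K_S$ is a countable union over the finite subsets $S \sse \N$, and each
$$ K_S = c_S + \Big\{ \textstyle\sum_{k \notin S} \frac{\ep_k}{P_k} : \ep_k \in \set{0,1,\dots,b_k-2} \Big\}, \qquad c_S = \sum_{k \in S}\Big( k! + \frac{b_k-1}{P_k}\Big), $$
differs from the homogeneous Moran set
$$ E = \Big\{ \textstyle\sum_{k=1}^{\f} \frac{\ep_k}{P_k} : \ep_k \in \set{0,1,\dots,b_k-2} \Big\} $$
only in finitely many digits; since the dimension of a Moran set is governed by its tail, both $E$ and each $K_S$ are finite unions of translates of one common tail set, whence $\dim_H K_S = \dim_H E$ and $\dim_P K_S = \dim_P E$ for every finite $S$. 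As $\dim_H$ and $\dim_P$ are countably stable, it then suffices to prove that $\dim_H E$ and $\dim_P E$ equal the two claimed expressions.

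To compute the dimensions of $E$ I would equip it with the uniform Moran measure $\rho$, which assigns mass $\big(\prod_{j \le k}(b_j-1)\big)^{-1}$ to each level-$k$ cylinder (the set obtained by fixing $\ep_1,\dots,\ep_k$), and invoke the local-dimension characterizations of the two dimensions: if the lower local dimension $\underline{d}_\rho(x) = \liminf_{r\to 0} \frac{\log \rho(B(x,r))}{\log r}$ is a constant $s$ for every $x \in E$, then $\dim_H E = s$, and analogously the upper local dimension $\overline{d}_\rho(x) = \limsup_{r\to 0} \frac{\log \rho(B(x,r))}{\log r}$ determines $\dim_P E$. The two geometric inputs I would establish are that distinct level-$k$ cylinders have left endpoints at mutual distance at least $1/P_k$, and that a level-$k$ cylinder has diameter $\ell_k = \sum_{j>k}(b_j-2)/P_j \asymp b_{k+1}/P_{k+1}$; both follow from geometric-series estimates using $b_j \le N_j$ and $N_j \ge 2$. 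It follows that $\rho(B(x,r)) \asymp \big(\prod_{j\le k}(b_j-1)\big)^{-1}$ once $r \asymp 1/P_k$, and that this mass is maintained all the way down to $r \asymp \ell_k$. Evaluating $\frac{\log\rho(B(x,r))}{\log r}$ at these two critical scales gives, for every $x \in E$,
$$ \overline{d}_\rho(x) = \limsup_{k\to\f} \frac{\log \prod_{j\le k}(b_j-1)}{\log P_k}, \qquad \underline{d}_\rho(x) = \liminf_{k\to\f} \frac{\log \prod_{j\le k}(b_j-1)}{\log P_{k+1} - \log b_{k+1}}. $$

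The last step is to replace $\prod_{j\le k}(b_j-1)$ by $b_1 \cdots b_k$. Since $\sum_k 1/b_k < \f$, the series $\sum_k \log(1 - 1/b_k)$ converges, so $\log \prod_{j\le k}(b_j-1) = \log b_1\cdots b_k + O(1)$; because $\log P_k \ge k\log 2 \to \f$, this bounded correction affects neither the $\liminf$ nor the $\limsup$, and the Stolz--Ces\`aro theorem (Theorem~\ref{stolz-cesaro-theorm}) is the convenient device for carrying out these limit passages. This yields exactly $\dim_H E = \liminf_k \frac{\log b_1 \cdots b_k}{\log N_1 \cdots N_{k+1} - \log b_{k+1}}$ and $\dim_P E = \limsup_k \frac{\log b_1 \cdots b_k}{\log N_1 \cdots N_k}$, which completes the proof.

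I expect the main obstacle to lie in the local-dimension evaluation. Because the $b_k - 1$ children of each cylinder are packed consecutively at the left end of their parent rather than spread evenly across it, the ratio $r \mapsto \frac{\log\rho(B(x,r))}{\log r}$ oscillates between consecutive levels, and one must verify that its extreme values are attained precisely at the scales $r \asymp 1/P_k$ (producing the $\limsup$ with denominator $\log P_k$) and $r \asymp \ell_k \asymp b_{k+1}/P_{k+1}$ (producing the $\liminf$ with the $-\log b_{k+1}$ correction). Pinning down the diameter estimate $\ell_k \asymp b_{k+1}/P_{k+1}$ together with the separation of cylinders, and treating degenerate levels such as $b_k = 2$ where no genuine branching occurs, is where the careful work is concentrated.
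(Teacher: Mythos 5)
Your overall skeleton coincides with the paper's: both proofs reduce, via Proposition~\ref{prop-support} and the countable stability of $\dim_H$ and $\dim_P$, to computing the dimensions of a single homogeneous Moran-type set obtained from the digit sets $\set{0,1,\dots,b_k-2}$, and both finish with the Stolz--Ces\`aro theorem to replace $\prod_{j\le k}(b_j-1)$ by $b_1\cdots b_k$ (your observation that $\sum_k\log(1-1/b_k)$ converges gives the same conclusion). The genuine divergence is in how that single set's dimensions are obtained. The paper simply observes that the set $C$ in \eqref{the-set-C} is a \emph{partial homogeneous Cantor set} and quotes the dimension formulas of Feng, Wen and Wu \cite{Feng-Wen-Wu-1997}, whereas you propose to derive them from scratch by putting the uniform measure $\rho$ on the set and computing its upper and lower local dimensions at every point. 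Your route is self-contained and explains \emph{why} the Hausdorff formula carries the correction $-\log b_{k+1}$ (the children are packed at the left of each parent, so cylinders have diameter $\asymp b_{k+1}/P_{k+1}$ rather than $1/P_k$), which the citation hides; the price is that the part you defer as ``the careful work'' is exactly the content of the cited result and is not negligible. In particular, evaluating $\log\rho(B(x,r))/\log r$ only at the two critical scales $r\asymp 1/P_k$ and $r\asymp\ell_k$ does not suffice: for the upper bound $\dim_P\le\beta$ you need $\overline{d}_\rho(x)\le\beta$ for \emph{all} $x$, and for $r$ in the intermediate range $[1/P_{k+1},\ell_k]$ the crude bounds $\prod_{j\le k+1}(b_j-1)^{-1}\le\rho(B(x,r))\le 3\prod_{j\le k}(b_j-1)^{-1}$ give a ratio that can exceed the claimed $\limsup$ (and dip below the claimed $\liminf$). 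What saves the argument is the refined count that a ball of radius $r\in[1/P_{k+1},\ell_k]$ meets, and (on at least one side of $x$) contains, on the order of $rP_{k+1}$ level-$(k+1)$ cylinders, so that $\rho(B(x,r))\asymp rP_{k+1}\prod_{j\le k+1}(b_j-1)^{-1}$ there; with this the ratio interpolates monotonically between consecutive critical values and your two formulas for $\underline{d}_\rho$ and $\overline{d}_\rho$ are correct for every $x$. With that estimate supplied (and the finitely many levels with $b_k\le 3$ discarded, which is harmless since $\sum 1/b_k<\f$ forces $b_k\to\f$), your argument is a complete and valid alternative to the citation.
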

\begin{proof}
We only prove the Hausdorff dimension formula, and the proof of packing dimension is almost identical.
We write $$s = \liminf_{k \to \f} \frac{\log b_1 b_2 \cdots b_k}{\log N_1 N_2 \cdots N_k N_{k+1} - \log b_{k+1}}.$$

Fix a finite subset $S\sse \N$.  Let $k_0 = \max\set{k: k \in S}$.
We define
  \begin{equation}\label{the-set-C}
    C =\set{ \sum_{k=1}^{\f} \frac{\ep_k}{N_{k_0+1} N_{k_0+2} \cdots N_{k_0 +k}}: \ep_k \in \set{0,1,\cdots, b_{k_0+k} -2 } \text{ for } k \ge 1 }.
  \end{equation}
Since $C$ is the partial homogeneous Cantor set, by the dimension formula for partial homogeneous Cantor sets in \cite{Feng-Wen-Wu-1997},
we have that
  $$\dim_H C = \liminf_{k \to \f} \frac{\log (b_{k_0+1}-1) (b_{k_0+2}-1) \cdots (b_{k_0+k}-1)}{\log N_{k_0+1} N_{k_0+2} \cdots N_{k_0+k} N_{k_0+k+1} - \log (b_{k_0+k+1}-1)}.$$
  Since $b_k \to \f$ as $k \to \f$, by Theorem \ref{stolz-cesaro-theorm}, we have that
  $$\lim_{k \to \f} \frac{ \log (b_{k_0+1}-1) (b_{k_0+2}-1) \cdots (b_{k_0+k}-1) }{ \log b_1 b_2 \cdots b_{k_0 +k} } = \lim_{k\to \f} \frac{\log( b_{k_0+k}-1 )}{\log b_{k_0+k}} =1.$$
  Together with
  $$\lim_{k \to \f} \frac{\log N_1 N_2 \cdots N_{k_0+k} N_{k_0+k+1} - \log b_{k_0+k+1} }{\log N_{k_0+1} N_{k_0+2} \cdots N_{k_0+k} N_{k_0+k+1} - \log (b_{k_0+k+1}-1)}=1, $$
  we conclude that
  $$ \dim_H C = \liminf_{k \to \f} \frac{ \log b_1 b_2 \cdots b_{k_0+k} }{\log N_1 N_2 \cdots N_{k_0+k} N_{k_0 + k +1} - \log b_{k_0 + k +1}} = s. $$
  Since $K_S$ is the union of finitely many of similar copies of $C$,  we have that $\dim_H K_S = \dim_H C =s$.
  Finally, by Proposition~\ref{prop-support} and the countable stability of Hausdorff dimension, we obtain that
  $\dim_H \mathrm{spt}(\mu) = s$.
\end{proof}

\begin{proof}[Proof of Theorem \ref{dimension-theorem}]
  We prove this theorem by choosing two appropriate sequences $\set{b_k}$ and $\set{N_k}$.
  Let $b_1=2$ and $b_k = k^2$ for $k \ge 2$.
  Choose a strictly increasing sequence of integers $\{ \ell_j \}_{j=1}^\f$ such that $\ell_1 =0$ and
  \begin{equation}\label{eq-proof-1-7-2}
    \lim_{j \to \f} \frac{\ell_j \log \ell_j}{\ell_{j+1}-\ell_j} =0.
  \end{equation}
  The family of functions $g_\gamma: \N \to \N$ for $0\le \gamma \le 1$ is defined by
  \begin{equation*}
    g_\gamma(n) =
      \begin{cases}
        n^{1+\lfloor \log n \rfloor}, & \mbox{if } \gamma=0; \\
        \lfloor n^{1/\gamma -1} \rfloor n, & \mbox{if } 0< \gamma < 1; \\
        2n, & \mbox{if } \gamma=1;
      \end{cases}
  \end{equation*}
  where $\lfloor x \rfloor$ denotes the integral part of a real number $x$.
  Fix $0\le \alpha \le \beta\le 1$.
  For $k \ge 1$, let
  \begin{equation*}
    N_k =
    \begin{cases}
      g_\alpha(b_k), & \mbox{if } \ell_j < k \le \ell_{j+1} \text{ for some odd $j \in \N$}; \\
      g_\beta(b_k), & \mbox{if } \ell_j < k \le \ell_{j+1} \text{ for some even $j \in \N$}.
    \end{cases}
  \end{equation*}

  Clearly, for each $0\le \gamma \le 1$,  we have that (i) $n \mid g_\gamma(n)$ for all $n\ge 1$; (ii)
  \begin{equation}\label{eq-proof-1-7-1}
    \lim_{n \to \f} \frac{\log n}{\log g_\gamma(n)} = \gamma;
  \end{equation}
  and (iii) there exists $n_0 = n_0(\gamma) \ge 1$ such that $g_\gamma(n) \le n^{1+\log n}$ for all $n \ge n_0$.
  Thus, we have that $b_k \mid N_k$ for all $k \ge 1$.
  It is obvious that $$\sum_{k=1}^{\f} \frac{1}{b_k} < \f. $$
  Therefore, the infinite convolution $\mu$ defined by \eqref{def-mu} is a spectral measure.
  By Proposition \ref{prop-support}, the support of $\mu$ is \emph{not} compact.

  Next we calculate the dimensions of the support of $\mu$.
  By Theorem \ref{stolz-cesaro-theorm} and \eqref{eq-proof-1-7-1}, we have that
  $$ \liminf_{k \to \f} \frac{\log b_1 b_2 \cdots b_k}{\log N_1 N_2 \cdots N_k }\ge \liminf_{k\to \f} \frac{\log b_k}{\log N_k} = \alpha, $$
  and $$ \limsup_{k \to \f} \frac{\log b_1 b_2 \cdots b_k}{\log N_1 N_2 \cdots N_k }\le \limsup_{k\to \f} \frac{\log b_k}{\log N_k} = \beta. $$

  For sufficiently large $j$, we have that
  \begin{align*}
    \frac{\log b_1 b_2 \cdots b_{\ell_{2j+1}} }{\log N_1 N_2 \cdots N_{\ell_{2j+1}} }
    & \ge \frac{  \log b_{\ell_{2j}+1} b_{\ell_{2j}+2} \cdots b_{\ell_{2j+1}} }{ \log N_1 N_2 \cdots N_{\ell_{2j}} + \log N_{\ell_{2j}+1} N_{\ell_{2j}+2} \cdots N_{\ell_{2j+1}} } \\
    & \ge \frac{  \log b_{\ell_{2j}+1} b_{\ell_{2j}+2} \cdots b_{\ell_{2j+1}} }{ \ell_{2j} \log g_\alpha(b_{\ell_{2j}}) + \log g_\beta(b_{\ell_{2j}+1}) g_\beta(b_{\ell_{2j}+2}) \cdots g_\beta(b_{\ell_{2j+1}}) }.
  \end{align*}
  Together with \begin{align*}
    \frac{ \ell_{2j} \log g_\alpha(b_{\ell_{2j}}) }{ \log g_\beta(b_{\ell_{2j}+1}) g_\beta(b_{\ell_{2j}+2}) \cdots g_\beta(b_{\ell_{2j+1}}) } & \le \frac{ \ell_{2j} ( 1 +\log b_{\ell_{2j}} ) \log b_{\ell_{2j}} }{ (\ell_{2j+1} - \ell_{2j}) \log g_\beta(b_{\ell_{2j}+1}) } \\
    &\le \frac{\ell_{2j} ( 1 + 2 \log \ell_{2j})}{ \ell_{2j+1} - \ell_{2j}} \longrightarrow 0,
  \end{align*}
  which follows from \eqref{eq-proof-1-7-2},
  and $$\lim_{j \to \f} \frac{ \log b_{\ell_{2j}+1} b_{\ell_{2j}+2} \cdots b_{\ell_{2j+1}} }{ \log g_\beta(b_{\ell_{2j}+1}) g_\beta(b_{\ell_{2j}+2}) \cdots g_\beta(b_{\ell_{2j+1}}) } = \beta,$$
  which may be deduced from \eqref{eq-proof-1-7-1},
  we obtain that
  $$ \limsup_{k \to \f} \frac{\log b_1 b_2 \cdots b_k}{\log N_1 N_2 \cdots N_k } \ge \limsup_{j \to \f} \frac{\log b_1 b_2 \cdots b_{\ell_{2j+1}} }{\log N_1 N_2 \cdots N_{\ell_{2j+1}} } \ge \beta.$$
  Thus, by Proposition \ref{dimension-proposition}, we have that $$ \dim_P \mathrm{spt}(\mu)= \limsup_{k \to \f} \frac{\log b_1 b_2 \cdots b_k}{\log N_1 N_2 \cdots N_k }= \beta. $$

  On the other hand, we have that
  \begin{align*}
    \frac{\log b_1 b_2 \cdots b_{\ell_{2j}} }{\log N_1 N_2 \cdots N_{\ell_{2j}} }
    & \le \frac{ \log b_1 b_2 \cdots b_{\ell_{2j-1}} + \log b_{\ell_{2j-1}+1} b_{\ell_{2j-1}+2} \cdots b_{\ell_{2j}} }{ \log N_{\ell_{2j-1}+1} N_{\ell_{2j-1}+2} \cdots N_{\ell_{2j}} } \\
    & \le \frac{ \ell_{2j-1} \log b_{\ell_{2j-1}} }{ (\ell_{2j} - \ell_{2j-1}) \log g_\alpha(b_{\ell_{2j-1}+1}) } +
       \frac{ \log b_{\ell_{2j-1}+1} b_{\ell_{2j-1}+2} \cdots b_{\ell_{2j}} }{\log g_\alpha(b_{\ell_{2j-1}+1}) g_\alpha(b_{\ell_{2j-1}+2}) \cdots g_\alpha(b_{\ell_{2j}}) } \\
    & \le \frac{ \ell_{2j-1} }{ \ell_{2j} - \ell_{2j-1} } +
       \frac{ \log b_{\ell_{2j-1}+1} b_{\ell_{2j-1}+2} \cdots b_{\ell_{2j}} }{\log g_\alpha(b_{\ell_{2j-1}+1}) g_\alpha(b_{\ell_{2j-1}+2}) \cdots g_\alpha(b_{\ell_{2j}}) }.
  \end{align*}
  Thus, by \eqref{eq-proof-1-7-2} and \eqref{eq-proof-1-7-1}, we obtain that
  $$ \liminf_{k \to \f} \frac{\log b_1 b_2 \cdots b_k}{\log N_1 N_2 \cdots N_k } \le \liminf_{j \to \f} \frac{\log b_1 b_2 \cdots b_{\ell_{2j}} }{\log N_1 N_2 \cdots N_{\ell_{2j}} }\le \alpha.$$
  It follows that $$ \liminf_{k \to \f} \frac{\log b_1 b_2 \cdots b_k}{\log N_1 N_2 \cdots N_k }= \alpha. $$
  Note that $N_{k+1} \le b_{k+1}^{1+\log b_{k+1}}$ for all large enough $k$. So,
  $$ \frac{\log N_{k+1} - \log b_{k+1} }{ \log N_1 N_2 \cdots N_k }  \le \frac{(\log b_{k+1})^2}{\log b_1 b_2 \cdots b_k} \le \frac{2 \big(\log(k+1)\big)^2}{\log (k!)} \longrightarrow 0. $$
  It follows from Proposition \ref{dimension-proposition} that
  $$\dim_H \mathrm{spt}(\mu) = \liminf_{k \to \f} \frac{\log b_1 b_2 \cdots b_k}{\log N_1 N_2 \cdots N_k N_{k+1} - \log b_{k+1}} = \alpha.$$

  It remains to show that the measure $\mu$ and the Lebesgue measure is mutually singular.
  If $0\le \alpha < 1$, since $\dim_H \mathrm{spt}(\mu) =\alpha < 1$, it is obvious that $\mu$ is singular.
  If $\alpha = \beta =1$, then we have that $N_k = 2 b_k$ for all $k \ge 1$, and the set $C$ defined in \eqref{the-set-C} is a Lebesgue null set. Thus, the Lebesgue measure of $\mathrm{spt}(\mu)$ is zero, that is, $\mu$ is singular.
  The proof is completed.
\end{proof}

\section*{Acknowledgements}

The authors are grateful to the referees for their helpful comments.
Wenxia Li is supported by NSFC No. 12071148, 11971079 and Science and Technology Commission of Shanghai Municipality (STCSM)  No.~18dz2271000.
Jun Jie Miao is supported by Science and Technology Commission of Shanghai Municipality (STCSM)  No.~18dz2271000.


\begin{thebibliography}{10}

\bibitem{An-Fu-Lai-2019}
L.-X. An, X.-Y. Fu, C.-K. Lai,
\newblock On spectral Cantor-Moran measures and a variant of Bourgain's sum of sine problem,
\newblock Adv. Math. 349 (2019), 84–124.

\bibitem{An-He-He-2019}
L.-X. An, L. He, X.-G. He,
\newblock Spectrality and non-spectrality of the Riesz product measures with three elements in digit sets,
\newblock J. Funct. Anal. 277 (2019), no. 1, 255–278.

\bibitem{An-He-2014}
L.-X. An, X.-G. He,
\newblock A class of spectral Moran measures,
\newblock J. Funct. Anal. 266 (2014), no. 1, 343–354.

\bibitem{An-He-Lau-2015}
L.-X. An, X.-G. He, K.-S. Lau,
\newblock Spectrality of a class of infinite convolutions,
\newblock Adv. Math. 283 (2015), 362–376.

\bibitem{An-He-Li-2015}
L.-X. An, X.-G. He, H.-X. Li,
\newblock Spectrality of infinite Bernoulli convolutions,
\newblock J. Funct. Anal. 269 (2015), no. 5, 1571–1590.

\bibitem{An-Lai-2020}
L.-X. An, C.-K. Lai,
\newblock Arbitrarily sparse spectra for self-affine spectral measures,
\newblock arXiv:2006.13497, 2020.

\bibitem{An-Wang-2021}
L.-X. An, C. Wang,
\newblock On self-similar spectral measures,
\newblock J. Funct. Anal. 280 (2021), no. 3, Paper No. 108821, 31 pp.

\bibitem{Bil03}
P. Billingsley,
\newblock {\em Convergence of probability measures},
\newblock John Wiley \& Sons, Inc., Second edition, 1999.

\bibitem{Chan-Ngai-Teplyaev-2015}
J. Chan, S.-M. Ngai, A. Teplyaev,
\newblock One-dimensional wave equations defined by fractal Laplacians,
\newblock J. Anal. Math. 127 (2015), 219–246.

\bibitem{Christensen-2003}
O. Christensen,
\newblock \emph{An introduction to frames and Riesz bases},
\newblock Applied and Numerical Harmonic Analysis. Birkhäuser Boston, Inc., Boston, MA, 2003.

\bibitem{Dai-2012}
X.-R. Dai,
\newblock When does a Bernoulli convolution admit a spectrum?,
\newblock Adv. Math. 231 (2012), no. 3-4, 1681–1693.


\bibitem{Dai-He-Lau-2014}
X.-R. Dai, X.-G. He, K.-S. Lau,
\newblock On spectral $N$-Bernoulli measures,
\newblock Adv. Math. 259 (2014), 511–531.

\bibitem{Dai-Sun-2015}
X.-R. Dai, Q. Sun,
\newblock Spectral measures with arbitrary Hausdorff dimensions,
\newblock J. Funct. Anal. 268 (2015), no. 8, 2464–2477.

\bibitem{Deng-Chen-2021}
Q.-R. Deng, J.-B. Chen,
\newblock Uniformity of spectral self-affine measures,
\newblock Adv. Math. 380 (2021), Paper No. 107568, 17 pp.

\bibitem{Duffin-Schaeffer-1952}
R. J. Duffin, A. C. Schaeffer,
\newblock A class of nonharmonic Fourier series,
\newblock Trans. Amer. Math. Soc. 72 (1952), 341–366.

\bibitem{Dutkay-Han-Sun-2014}
D. Dutkay, D. Han, Q. Sun,
\newblock Divergence of the mock and scrambled Fourier series on fractal measures,
\newblock Trans. Amer. Math. Soc. 366 (2014), no. 4, 2191-2208.

\bibitem{Dutkay-Han-Sun-Weber-2011}
D. Dutkay, D. Han, Q. Sun, E. Weber,
\newblock On the Beurling dimension of exponential frames,
\newblock Adv. Math. 226 (2011), no. 1, 285–297.

\bibitem{Dutkay-Haussermann-Lai-2019}
D. Dutkay, J. Haussermann, C.-K. Lai,
\newblock Hadamard triples generate self-affine spectral measures,
\newblock Trans. Amer. Math. Soc. 371 (2019), no. 2, 1439–1481.

\bibitem{Dutkay-Lai-2014}
D. Dutkay, C.-K. Lai,
\newblock Uniformity of measures with Fourier frames,
\newblock Adv. Math. 252 (2014), 684–707.

\bibitem{Dutkay-Lai-2017}
D. Dutkay, C.-K. Lai,
\newblock Spectral measures generated by arbitrary and random convolutions,
\newblock J. Math. Pures Appl. (9) 107 (2017), no. 2, 183–204.

\bibitem{Falco03}
K. Falconer,
\newblock {\em Fractal geometry: Mathematical foundations and applications},
\newblock John Wiley \& Sons, Ltd., Third edition, 2014.

\bibitem{Farkas-Matolcsi-Mora-2006}
B. Farkas, M. Matolcsi, P. Móra,
\newblock On Fuglede's conjecture and the existence of universal spectra,
\newblock J. Fourier Anal. Appl. 12 (2006), no. 5, 483–494.

\bibitem{Farkas-Revesz-2006}
B. Farkas, S. G. Révész,
\newblock Tiles with no spectra in dimension 4,
\newblock Math. Scand. 98 (2006), no. 1, 44–52.

\bibitem{Feng-Wen-Wu-1997}
D.-J. Feng, Z.-Y. Wen, J. Wu,
\newblock {Some dimensional results for homogeneous Moran sets},
\newblock {Sci. China Ser. A 40 (1997), no. 5, 475–482}.

\bibitem{Fu-Wen-2017}
Y.-S. Fu, Z.-X. Wen,
\newblock Spectrality of infinite convolutions with three-element digit sets,
\newblock Monatsh. Math. 183 (2017), no. 3, 465–485.

\bibitem{Fuglede-1974}
B. Fuglede,
\newblock Commuting self-adjoint partial differential operators and a group theoretic problem,
\newblock J. Functional Analysis 16 (1974), 101–121.

\bibitem{He-Tang-Wu-2019}
X.-G. He, M.-W. Tang, Z.-Y. Wu,
\newblock Spectral structure and spectral eigenvalue problems of a class of self-similar spectral measures,
\newblock J. Funct. Anal. 277 (2019), no. 10, 3688–3722.

\bibitem{Hu-Lau-2019}
T.-Y. Hu, K.-S. Lau,
\newblock Limiting behavior of infinite products scaled by Pisot numbers,
\newblock J. Fourier Anal. Appl. 25 (2019), no. 4, 1695–1707.

\bibitem{Iosevich-Katz-Tao-2003}
A. Iosevich, N. Katz, T.  Tao,
\newblock The Fuglede spectral conjecture holds for convex planar domains,
\newblock Math. Res. Lett. 10 (2003), no. 5-6, 559–569.

\bibitem{Jessen-Wintner-1935}
B. Jessen, A. Wintner,
\newblock Distribution functions and the Riemann zeta function,
\newblock Trans. Amer. Math. Soc. 38 (1935), no. 1, 48–88.

\bibitem{Jorgensen-Pedersen-1998}
P. Jorgensen, S. Pedersen,
\newblock Dense analytic subspaces in fractal $L^2$-spaces,
\newblock J. Anal. Math. 75 (1998), 185–228.

\bibitem{Kolountzakis-Matolcsi-2006a}
M. N. Kolountzakis, M. Matolcsi,
\newblock Tiles with no spectra,
\newblock Forum Math. 18 (2006), no. 3, 519–528.

\bibitem{Kolountzakis-Matolcsi-2006b}
M. N. Kolountzakis, M. Matolcsi,
\newblock Complex Hadamard matrices and the spectral set conjecture,
\newblock Collect. Math. 2006, Vol. Extra, 281–291.

\bibitem{Laba-2001}
I. Łaba,
\newblock Fuglede's conjecture for a union of two intervals,
\newblock Proc. Amer. Math. Soc. 129 (2001), no. 10, 2965–2972.

\bibitem{Laba-Wang-2002}
I. Łaba, Y. Wang,
\newblock On spectral Cantor measures,
\newblock J. Funct. Anal. 193 (2002), no. 2, 409–420.

\bibitem{Lai-Wang-2017}
C.-K. Lai, Y. Wang,
\newblock Non-spectral fractal measures with Fourier frames,
\newblock J. Fractal Geom. 4 (2017), no. 3, 305–327.

\bibitem{Landau-1967}
H. J. Landau,
\newblock Necessary density conditions for sampling and interpolation of certain entire functions,
\newblock Acta Math. 117 (1967), 37–52.

\bibitem{Li-2009}
J.-L. Li,
\newblock Non-spectrality of planar self-affine measures with three-elements digit set,
\newblock J. Funct. Anal. 257 (2009), no. 2, 537–552.

\bibitem{LMW}
W. Li, J.J. Miao, Z. Wang,
\newblock Spectrality of random convolutions generated by finitely many Hadamard triples,
\newblock arXiv:2203.11619, 2022.

\bibitem{Lind-Peres-Schlag-2002}
E. Lindenstrauss, Y. Peres, W. Schlag,
\newblock Bernoulli convolutions and an intermediate value theorem for entropies of $K$-partitions,
\newblock J. Anal. Math. 87 (2002), 337–367.


\bibitem{Liu-Dong-Li-2017}
J.-C. Liu, X.-H. Dong, J.-L. Li,
\newblock Non-spectral problem for the planar self-affine measures,
\newblock J. Funct. Anal. 273 (2017), no. 2, 705–720.

\bibitem{Liu-Wang-2003}
Y. Liu, Y. Wang,
\newblock The uniformity of non-uniform Gabor bases,
\newblock Adv. Comput. Math. 18 (2003), no. 2-4, 345–355.

\bibitem{Matolcsi-2005}
M. Matolcsi,
\newblock Fuglede's conjecture fails in dimension 4,
\newblock Proc. Amer. Math. Soc. 133 (2005), no. 10, 3021–3026.

\bibitem{Nitzan-2016}
S. Nitzan, A. Olevskii, A. Ulanovskii,
\newblock Exponential frames on unbounded sets,
\newblock Proc. Amer. Math. Soc. 144 (2016), no. 1, 109–118.

\bibitem{Ortega-Seip-2002}
J. Ortega-Cerd\`{a}, K. Seip,
\newblock Fourier frames,
\newblock Ann. of Math. (2) 155 (2002), no. 3, 789–806.

\bibitem{Peres-Schlag-Solomyak-2000}
Y. Peres, W. Schlag, B. Solomyak,
\newblock Sixty years of Bernoulli convolutions,
\newblock Fractal geometry and stochastics, II, 39–65, Progr. Probab., 46, Birkh\"{a}user, Basel, 2000.

\bibitem{Shi-2021}
R. Shi,
\newblock On dimensions of frame spectral measures and their frame spectra,
\newblock Ann. Fenn. Math. 46 (2021), no. 1, 483–493.

\bibitem{Shmerkin-2014}
P. Shmerkin,
\newblock On the exceptional set for absolute continuity of Bernoulli convolutions,
\newblock Geom. Funct. Anal. 24 (2014), no. 3, 946–958.

\bibitem{Solomyak1995}
B. Solomyak,
\newblock On the random series $\sum\pm\lambda^n$ (an Erd\H{o}s problem),
\newblock Ann. of Math. (2) 142(1995), 611-625.

\bibitem{Strichartz-2000}
R. Strichartz,
\newblock Mock Fourier series and transforms associated with certain Cantor measures,
\newblock J. Anal. Math. 81 (2000), 209–238.

\bibitem{Strichartz-2006}
R. Strichartz,
\newblock Convergence of mock Fourier series,
\newblock J. Anal. Math. 99 (2006), 333–353.

\bibitem{Tao-2004}
 T. Tao,
\newblock Fuglede's conjecture is false in $5$ and higher dimensions,
\newblock Math. Res. Lett. 11 (2004), no. 2-3, 251–258.

\bibitem{Wang-2002}
Y. Wang,
\newblock Wavelets, tiling, and spectral sets,
\newblock Duke Math. J. 114 (2002), no. 1, 43–57.

\end{thebibliography}
\end{document}